\documentclass{article}

\usepackage[a4paper, margin=1in]{geometry}
\usepackage[english]{babel}
\usepackage{csquotes}
\usepackage[backend=biber, sorting=none]{biblatex}
\usepackage{amsmath, amssymb, amsfonts, amsthm, cancel}
\usepackage{algorithm, algpseudocode}
\usepackage{graphicx}
\usepackage{booktabs, longtable, adjustbox}
\usepackage{caption}
\usepackage{siunitx}
\usepackage{enumitem}
\usepackage{authblk}
\usepackage[colorlinks=true, urlcolor=blue, citecolor=blue, linkcolor=blue, bookmarks=false]{hyperref}

\theoremstyle{plain}
\newtheorem{theorem}{Theorem}
\newtheorem{lemma}{Lemma}

\theoremstyle{definition}
\newtheorem{definition}{Definition}
\newtheorem{assumption}{Assumption}

\theoremstyle{remark}
\newtheorem{remark}{Remark}

\newcommand{\R}{\mathbb{R}}
\newcommand{\norm}[1]{\|#1\|}
\DeclareMathOperator{\tr}{tr}
\newcommand{\PG}{\Pi_{\Gamma}}
\newcommand{\PI}[1]{\Pi_{#1}}
\newcommand{\keywords}[1]{\noindent\textit{Keywords:} #1}

\newcommand{\algoname}[1]{{\small\texttt{#1}}}
\newcommand{\GMOPCGM}{\algoname{GMOPCGM}}
\newcommand{\GCGPM}{\algoname{GCGPM}}
\newcommand{\MOPCGM}{\algoname{MOPCGM}}
\newcommand{\CGPM}{\algoname{CGPM}}
\newcommand{\STTDFPM}{\algoname{STTDFPM}}

\newcommand{\thetaG}{\theta_k^{\scriptscriptstyle G}}     
\newcommand{\thetaC}{\hat{\theta}_k}                       
\newcommand{\tstar}{t_k^*}                                 

\title{Spectral conjugate gradient projection methods for large-scale\\
monotone equations without Lipschitz continuity}
\author[1,5]{Kabenge Hamiss}
\author[1,4]{Mohammed Alshahrani}
\author[2,3]{Mujahid N.\ Syed}
\affil[1]{\small Department of Mathematics, King Fahd University of Petroleum and Minerals, Dhahran 31261, Saudi Arabia}
\affil[2]{\small Department of Industrial \& Systems Engineering, King Fahd University of Petroleum and Minerals, Dhahran 31261, Saudi Arabia}
\affil[3]{\small Interdisciplinary Research Center for Intelligent Secure Systems, King Fahd University of Petroleum and Minerals, Dhahran 31261, Saudi Arabia}
\affil[4]{\small Interdisciplinary Research Center for Smart Mobility and Logistics, King Fahd University of Petroleum and Minerals, Dhahran 31261, Saudi Arabia}
\affil[5]{\small Department of Mathematics and Statistics, Islamic University in Uganda, Mbale 2555, Uganda}

\begin{document}
\maketitle

\begin{abstract}
We introduce two derivative-free projection methods for large-scale systems of nonlinear monotone equations subject to convex constraints. Both methods incorporate an adaptive spectral parameter into established conjugate gradient frameworks: the first generalizes the modified optimal Perry method via an eigenvalue-optimized scaling matrix, and the second generalizes the Hager--Zhang-type conjugate gradient projection method via a spectral Dai--Liao parameter. The resulting search directions satisfy a sufficient descent condition independent of the line search. For the first method, we establish global convergence under monotonicity alone, without requiring Lipschitz continuity of the mapping. For the second, global convergence holds under the standard monotonicity and Lipschitz continuity assumptions. Numerical experiments on 18 test problems across dimensions up to 120{,}000, together with applications to $\ell_1$-regularized signal recovery and regularized logistic regression, confirm the practical effectiveness of the proposed approach.
\end{abstract}

\keywords{Nonlinear monotone equations, convex constraints, spectral conjugate gradient method, derivative-free method, projection method, global convergence, compressed sensing, logistic regression.}

\medskip
\noindent\textit{MSC 2020:} 65H10, 65K05, 90C30, 90C56

\section{Introduction}\label{sec:intro}

We consider the problem of finding a vector $x \in \Gamma$ satisfying
\begin{equation}\label{eq:problem}
  G(x) = 0,
\end{equation}
where $G \colon \R^n \to \R^n$ is a continuous and monotone mapping and $\Gamma \subset \R^n$ is a nonempty, closed, and convex set. Systems of this form arise in diverse areas of science and engineering. Variational inequality problems and complementarity problems can often be reformulated as~\eqref{eq:problem} \cite{he2002new, malitsky2014extragradient, zhao2001monotonicity}. Further applications include chemical equilibrium computations~\cite{meintjes1987methodology, zeleznik1968calculation}, compressed sensing and signal restoration~\cite{figueiredo2007gradient, abubakar2020note}, regularized logistic regression and machine learning~\cite{ibrahim2025relaxed, chorowski2014learning}, and problems in fluid and plasma physics~\cite{chen2019gramian, gao2019mathematical}.

When~$G$ is differentiable, Newton and quasi-Newton methods~\cite{nocedal1999numerical, iusem1997newton} offer rapid local convergence. However, they require computing or approximating the Jacobian of~$G$ at each iteration, which entails $O(n^2)$ storage and substantial computational cost per step. For large-scale problems, or when $G$ is not differentiable, these methods become impractical.

Conjugate gradient (CG) methods provide a natural alternative. They require only function evaluations and $O(n)$ storage, making them well suited for large-scale problems. Classical CG parameters---Fletcher--Reeves~\cite{fletcher1964function}, Polak--Ribi\`{e}re--Polyak~\cite{polyak1969conjugate}, Hestenes--Stiefel~\cite{hestenes1952methods}, and Dai--Yuan~\cite{dai1999nonlinear}---have given rise to a rich family of CG-based derivative-free methods for solving~\eqref{eq:problem}; see~\cite{cheng2009prp, li2011class, liu2015projection, xiao2013conjugate} and the references therein. The spectral gradient method of Zhang and Zhou~\cite{zhang2006spectral}, which incorporates Barzilai--Borwein step sizes~\cite{barzilai1988two}, has proven especially effective for monotone equations. The combination of CG update formulas with spectral parameters has led to spectral conjugate gradient methods~\cite{birgin2000nonmonotone, abubakar2022hybrid}, which often exhibit improved numerical performance.

A key ingredient for extending these methods to the constrained setting~$\Gamma \neq \R^n$ is the projection technique of Solodov and Svaiter~\cite{solodov1999globally}. The approach proceeds in three stages at each iteration: (i)~compute a descent direction~$p_k$ and find a step size~$\alpha_k$ via a line search, yielding the trial point $z_k = x_k + \alpha_k p_k$; (ii)~construct a hyperplane that separates the current iterate from the solution set, using the monotonicity of~$G$; (iii)~project onto this hyperplane to obtain the next iterate~$x_{k+1}$. This framework has become the standard approach for designing derivative-free projection methods~\cite{yu2009spectral, wang2016self, gao2018efficient, zheng2020modified}.

Three-term CG methods, which generate directions of the form $p_k = -\theta_0 G_k + \theta_1 p_{k-1} + \theta_2 w_{k-1}$ for suitable vectors~$w_{k-1}$ and scalars~$\theta_i$, have received considerable attention due to their favorable descent properties and robustness; see~\cite{zhang2007some, andrei2013three, narushima2011three} for the unconstrained setting and~\cite{zheng2020conjugate, ibrahim2023two, liu2022three, liu2025threeterm} for the constrained monotone setting. Recently, several authors have combined three-term CG structures with spectral parameters and the Solodov--Svaiter projection to obtain methods with improved theoretical and computational properties~\cite{waziri2022double, abubakar2022hybrid, waziri2020descent}.

Among these methods, three serve as the foundation and benchmarks for our work. Sabi'u et al.~\cite{sabi2023modified} proposed the \MOPCGM, which derives the CG update from a modified optimal Perry matrix via eigenvalue analysis and establishes global convergence under monotonicity and Lipschitz continuity. Zheng et al.~\cite{zheng2020conjugate} proposed the \CGPM, which adapts the Hager--Zhang~\cite{hager2005new} conjugate gradient parameter to the constrained monotone setting and proves convergence under similar assumptions. Ibrahim et al.~\cite{ibrahim2023two} proposed the \STTDFPM, a spectral three-term method that achieves global convergence without requiring Lipschitz continuity, by using a pseudomonotonicity assumption.

In this paper, we introduce an adaptive spectral scaling parameter~$\lambda_k$ into both the \MOPCGM{} and \CGPM{} frameworks. The parameter~$\lambda_k$ is bounded within an interval $[\alpha_{\min}, \alpha_{\max}]$ and adapts to the local curvature of the problem, drawing on ideas from the Barzilai--Borwein spectral gradient family. The resulting methods, which we call \GMOPCGM{} and \GCGPM, retain the sufficient descent property of their predecessors while offering additional flexibility through the spectral parameter. The contributions of this paper can be summarized as follows.
\begin{enumerate}[label=(\roman*), nosep]
  \item We derive two new search directions that generalize the \MOPCGM{} and \CGPM{} directions via a spectral parameter~$\lambda_k$, and we show that both satisfy a sufficient descent condition independent of the line search.
  \item We prove global convergence of \GMOPCGM{} under Assumptions~\ref{A1} and~\ref{A2} alone (nonemptiness of the solution set and monotonicity), without requiring Lipschitz continuity of~$G$.
  \item We prove global convergence of \GCGPM{} under the standard Assumptions~\ref{A1}--\ref{A3} (including Lipschitz continuity).
  \item We present numerical experiments on 18 test problems, an application to compressed sensing, and an application to regularized logistic regression on real-world datasets that demonstrate the practical advantages of the spectral generalization.
\end{enumerate}

The remainder of this paper is organized as follows. Section~\ref{sec:prelim} collects the notation, definitions, and standing assumptions. Section~\ref{sec:methods} derives the two proposed methods and establishes their descent properties. Section~\ref{sec:convergence} presents the global convergence analysis. Section~\ref{sec:experiments} reports the numerical experiments. Sections~\ref{sec:application} and~\ref{sec:logreg} present applications to compressed sensing and regularized logistic regression, respectively. Section~\ref{sec:conclusion} concludes the paper.

\section{Preliminaries}\label{sec:prelim}

Throughout this paper, $\norm{\cdot}$ denotes the Euclidean norm, $\langle \cdot, \cdot \rangle$ denotes the standard inner product in~$\R^n$, and we write $G_k = G(x_k)$ for brevity. The projection operator onto the closed convex set~$\Gamma$ is defined by
\[
  \PG(x) = \arg\min_{y \in \Gamma} \norm{y - x}, \quad x \in \R^n.
\]

\begin{definition}\label{def:mono}
A mapping $G \colon \R^n \to \R^n$ is said to be \emph{monotone} if
\[
  \langle G(x) - G(y),\, x - y \rangle \geq 0, \quad \forall\, x, y \in \R^n.
\]
\end{definition}

The projection operator satisfies the following well-known properties~\cite{solodov1999globally}:
\begin{enumerate}[label=(\alph*), nosep]
  \item $\langle x - \PG(x),\, y - \PG(x) \rangle \leq 0$ for all $x \in \R^n$ and $y \in \Gamma$;
  \item $\norm{\PG(x) - \PG(y)} \leq \norm{x - y}$ for all $x, y \in \R^n$ (non-expansiveness);
  \item $\norm{\PG(x) - y}^2 \leq \norm{x - y}^2 - \norm{x - \PG(x)}^2$ for all $x \in \R^n$ and $y \in \Gamma$.
\end{enumerate}

We make the following standing assumptions.

\begin{assumption}\label{A1}
The solution set $S^* = \{x \in \Gamma : G(x) = 0\}$ is nonempty.
\end{assumption}

\begin{assumption}\label{A2}
The mapping $G$ is monotone on $\R^n$.
\end{assumption}

\begin{assumption}\label{A3}
The mapping $G$ is Lipschitz continuous on $\R^n$, i.e., there exists a constant $L > 0$ such that $\norm{G(x) - G(y)} \leq L\norm{x - y}$ for all $x, y \in \R^n$.
\end{assumption}

Since the algorithms evaluate~$G$ at each iteration, continuity of~$G$ is implicitly required; this is guaranteed by Assumption~\ref{A3} when it is in force, and holds for all problems considered in this paper. Assumption~\ref{A3} is used only in the convergence analysis of \GCGPM{} and in the alternative convergence proof of \GMOPCGM. The primary convergence result for \GMOPCGM{} requires only Assumptions~\ref{A1} and~\ref{A2}.

\section{The proposed methods}\label{sec:methods}

We derive the two proposed search directions and verify their descent properties. Throughout, we use the notation
\begin{equation}\label{eq:notation}
  s_{k-1} = z_{k-1} - x_{k-1}, \qquad
  y_{k-1} = G_k - G_{k-1}, \qquad
  v_{k-1} = y_{k-1} + \tau s_{k-1},
\end{equation}
for a fixed parameter $\tau > 0$.

We begin with the generalized modified optimal Perry conjugate gradient method. Following the quasi-Newton approach of Perry~\cite{perry1978modified} and Sabi'u et al.~\cite{sabi2023modified}, consider the search direction $p_k = -\tilde{Q}_k G_k$, where
\begin{equation}\label{eq:Qmatrix}
  \tilde{Q}_k = \lambda I
    - \frac{\lambda}{2}\,\frac{y_{k-1}s_{k-1}^T}{y_{k-1}^Ts_{k-1}}
    - \frac{\lambda}{2}\,\frac{s_{k-1}y_{k-1}^T}{y_{k-1}^Ts_{k-1}}
    + t_k\,\frac{s_{k-1}s_{k-1}^T}{y_{k-1}^Ts_{k-1}},
\end{equation}
with $\lambda > 0$ and $t_k > 0$. Since $G$ is monotone, $s_{k-1}^T y_{k-1} > 0$ whenever $x_k \neq x^*$, so both $s_{k-1}$ and $y_{k-1}$ are nonzero. For any vector~$a$ in the subspace spanned by $\{s_{k-1}, y_{k-1}\}$,
\[
  a^T \tilde{Q}_k\, a = t_k\,\frac{(a^T s_{k-1})^2}{y_{k-1}^T s_{k-1}} > 0,
\]
confirming positive definiteness on this subspace. The matrix $\tilde{Q}_k$ is a rank-2 perturbation of $\lambda I$, so $\lambda$ is an eigenvalue of multiplicity $n-2$. Let $\eta_k^+$ and $\eta_k^-$ denote the remaining two eigenvalues. We determine the optimal~$t_k$ through the following eigenvalue analysis. For readability, set
\begin{equation}\label{eq:ab}
  a \;=\; \frac{\norm{s_{k-1}}^2}{s_{k-1}^T y_{k-1}}, \qquad
  b \;=\; \frac{\norm{s_{k-1}}\,\norm{y_{k-1}}}{s_{k-1}^T y_{k-1}}.
\end{equation}

\begin{lemma}\label{lem:trace}
Let $\tilde{Q}_k$ be defined by~\eqref{eq:Qmatrix}. Then
\begin{equation}\label{eq:trace}
  \tr(\tilde{Q}_k) \;=\; \lambda(n-1) + a\,t_k.
\end{equation}
\end{lemma}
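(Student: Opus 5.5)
The plan is to use linearity of the trace together with the identity $\tr(uv^T) = v^T u$ for any $u,v\in\R^n$. We split $\tilde{Q}_k$ in~\eqref{eq:Qmatrix} into its four summands and compute the trace of each separately.

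First, $\tr(\lambda I) = n\lambda$. Second, the two rank-one cross terms give
\[
  \tr\!\Bigl(\frac{y_{k-1}s_{k-1}^T}{y_{k-1}^Ts_{k-1}}\Bigr) = \frac{s_{k-1}^Ty_{k-1}}{y_{k-1}^Ts_{k-1}} = 1,
\]
and likewise for $s_{k-1}y_{k-1}^T$. These two terms therefore contribute $-\tfrac{\lambda}{2}-\tfrac{\lambda}{2} = -\lambda$ in total. Third, the last term gives
\[
  \tr\!\bigl(t_k\, s_{k-1}s_{k-1}^T/(y_{k-1}^Ts_{k-1})\bigr) = t_k \norm{s_{k-1}}^2/(s_{k-1}^Ty_{k-1}) = a\,t_k,
\]
by the definition of $a$ in~\eqref{eq:ab}.

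Summing the three contributions yields $n\lambda - \lambda + a t_k = \lambda(n-1) + a t_k$, which is~\eqref{eq:trace}. The only point needing care is that the denominators are well defined, i.e.\ $s_{k-1}^Ty_{k-1}\neq 0$. This is the standing situation noted just before the lemma, where monotonicity is invoked for $s_{k-1}^T y_{k-1}>0$. There is no real obstacle; the computation is routine.

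As a consistency check, the trace should equal $(n-2)\lambda + \eta_k^+ + \eta_k^-$. This gives $\eta_k^+ + \eta_k^- = \lambda + a t_k$, which presumably feeds the subsequent eigenvalue analysis.
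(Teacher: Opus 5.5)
Your proof is correct and is exactly the paper's argument: apply linearity of the trace and $\tr(uv^T)=v^Tu$ to each summand of~\eqref{eq:Qmatrix}, giving $n\lambda - \tfrac{\lambda}{2} - \tfrac{\lambda}{2} + a\,t_k$. One small quibble on your side remark: monotonicity only yields $s_{k-1}^T y_{k-1} \geq 0$, so the nonvanishing of the denominator is presupposed by the definition of $\tilde{Q}_k$ rather than guaranteed by Assumption~\ref{A2}.
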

\begin{proof}
By linearity of the trace, $\tr(y_{k-1}s_{k-1}^T) = s_{k-1}^T y_{k-1}$, and $\tr(s_{k-1}s_{k-1}^T) = \norm{s_{k-1}}^2$:
\[
  \tr(\tilde{Q}_k) = n\lambda - \tfrac{\lambda}{2} - \tfrac{\lambda}{2} + a\,t_k
  = \lambda(n-1) + a\,t_k. \qedhere
\]
\end{proof}

\begin{lemma}\label{lem:tracesq}
Let $\tilde{Q}_k$ be defined by~\eqref{eq:Qmatrix}. Then
\begin{equation}\label{eq:tracesq}
  \tr\!\left(\tilde{Q}_k^T \tilde{Q}_k\right)
  \;=\; \lambda^2\!\left(n - \tfrac{3}{2}\right)
    + \tfrac{\lambda^2}{2}\,b^2
    + a^2 t_k^2.
\end{equation}
\end{lemma}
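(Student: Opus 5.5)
The plan is to use the fact that $\tilde{Q}_k$ is symmetric, so $\tr(\tilde{Q}_k^T\tilde{Q}_k)=\norm{\tilde{Q}_k}_F^2=\langle \tilde{Q}_k,\tilde{Q}_k\rangle_F$ with the Frobenius inner product $\langle A,B\rangle_F=\tr(A^TB)$. I would write $c=s_{k-1}^Ty_{k-1}>0$ (as observed before Lemma~\ref{lem:trace}), drop the subscripts $k-1$, and split~\eqref{eq:Qmatrix} into three symmetric pieces:
\[
A=\lambda I,\qquad B=-\frac{\lambda}{2c}\,(ys^T+sy^T),\qquad C=\frac{t_k}{c}\,ss^T .
\]
Then $\norm{\tilde{Q}_k}_F^2=\norm{A}_F^2+\norm{B}_F^2+\norm{C}_F^2+2\langle A,B\rangle_F+2\langle A,C\rangle_F+2\langle B,C\rangle_F$. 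Each term reduces to inner products of $s$ and $y$ through the single identity $\langle uv^T,wz^T\rangle_F=(u^Tw)(v^Tz)$.

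The six terms, in order:
\begin{itemize}
\item $\norm{A}_F^2=n\lambda^2$.
\item $2\langle A,B\rangle_F=2\lambda\tr(B)=-2\lambda^2$, since $\tr(ys^T+sy^T)=2c$ (the same computation as in Lemma~\ref{lem:trace}).
\item $2\langle A,C\rangle_F=2\lambda t_k\norm{s}^2/c=2\lambda a t_k$.
\item $\norm{ys^T+sy^T}_F^2=2\norm{y}^2\norm{s}^2+2c^2$, so $\norm{B}_F^2=\tfrac{\lambda^2}{4c^2}\bigl(2\norm{s}^2\norm{y}^2+2c^2\bigr)=\tfrac{\lambda^2}{2}b^2+\tfrac{\lambda^2}{2}$.
\item $\langle ys^T+sy^T,ss^T\rangle_F=2c\norm{s}^2$, so $2\langle B,C\rangle_F=-\tfrac{\lambda t_k}{c^2}\cdot 2c\norm{s}^2=-2\lambda a t_k$.
\item $\norm{C}_F^2=t_k^2\norm{s}^4/c^2=a^2t_k^2$.
\end{itemize}

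Summing these, the two $\lambda a t_k$ terms cancel. The constant terms combine as $n\lambda^2-2\lambda^2+\tfrac{\lambda^2}{2}=\lambda^2(n-\tfrac32)$. What remains is exactly~\eqref{eq:tracesq}, written in the notation~\eqref{eq:ab}.

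There is no real obstacle here; the only delicate point is bookkeeping. I would double-check two things. First, the cross terms of $B$ with itself, $\langle ys^T,sy^T\rangle_F=(y^Ts)(s^Ty)=c^2$, contribute the extra $\lambda^2/2$ that turns $n-2$ into $n-\tfrac32$. Second, the $B$--$C$ and $A$--$C$ cross terms cancel exactly, which is why no linear term in $t_k$ appears in~\eqref{eq:tracesq}.
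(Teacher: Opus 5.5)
Your proof is correct and follows the same route as the paper, a term-by-term expansion of $\tilde{Q}_k^T\tilde{Q}_k$ reduced by rank-one trace identities, which you organize as Frobenius inner products of the three symmetric pieces. Your bookkeeping is slightly more careful than the paper's displayed intermediate step. The cancelling cross terms are $\pm 2\lambda a t_k$, as you find, not the $\pm t_k\lambda a$ shown there, though the slip does not affect the final formula.
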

\begin{proof}
Expanding $\tilde{Q}_k^T \tilde{Q}_k$ term by term, applying the trace identities $\tr(y_{k-1}s_{k-1}^T) = s_{k-1}^T y_{k-1}$, $\tr(s_{k-1}s_{k-1}^T) = \norm{s_{k-1}}^2$, and $\tr(y_{k-1}y_{k-1}^T) = \norm{y_{k-1}}^2$, one obtains
\[
  \tr\!\left(\tilde{Q}_k^T \tilde{Q}_k\right)
  = n\lambda^2 - \tfrac{3}{2}\lambda^2
    + \cancel{t_k\lambda\,a}
    + \tfrac{\lambda^2}{2}\,b^2
    - \cancel{t_k\lambda\,a}
    + a^2 t_k^2,
\]
which simplifies to~\eqref{eq:tracesq}.
\end{proof}

\begin{lemma}\label{lem:eigenproduct}
The eigenvalues $\eta_k^+$ and $\eta_k^-$ satisfy
\begin{align}
  \eta_k^+ + \eta_k^- &= \lambda + a\, t_k, \label{eq:eigsum}\\
  \eta_k^+\, \eta_k^- &= \tfrac{\lambda^2}{4}(1 - b^2) + \lambda\, a\, t_k. \label{eq:eigprod}
\end{align}
\end{lemma}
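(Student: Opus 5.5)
The plan is to read both identities off Lemma~\ref{lem:trace} and Lemma~\ref{lem:tracesq}, using the fact that $\tilde{Q}_k$ is symmetric. It is the sum of $\lambda I$, the symmetric combination $-\tfrac{\lambda}{2}(y s^T + s y^T)/(y^Ts)$, and a multiple of $ss^T$. Its spectrum is therefore real: $\lambda$ with multiplicity $n-2$ (on the orthogonal complement of $\mathrm{span}\{s_{k-1},y_{k-1}\}$, where every rank-one term vanishes), together with $\eta_k^\pm$.

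\textbf{Sum.} Since the trace is the sum of the eigenvalues, Lemma~\ref{lem:trace} gives
\[
(n-2)\lambda + \eta_k^+ + \eta_k^- = \lambda(n-1) + a t_k .
\]
Subtracting $(n-2)\lambda$ yields \eqref{eq:eigsum}.

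\textbf{Product.} Because $\tilde{Q}_k$ is symmetric, $\tr(\tilde{Q}_k^T\tilde{Q}_k)$ equals the sum of the squared eigenvalues. Lemma~\ref{lem:tracesq} then gives
\[
(\eta_k^+)^2 + (\eta_k^-)^2 = \lambda^2\!\left(n-\tfrac32\right) + \tfrac{\lambda^2}{2}b^2 + a^2t_k^2 - (n-2)\lambda^2 = \tfrac{\lambda^2}{2} + \tfrac{\lambda^2}{2}b^2 + a^2 t_k^2 .
\]
Next use $2\eta_k^+\eta_k^- = (\eta_k^+ + \eta_k^-)^2 - \big((\eta_k^+)^2 + (\eta_k^-)^2\big)$ and substitute \eqref{eq:eigsum}:
\[
2\eta_k^+\eta_k^- = \lambda^2 + 2\lambda a t_k + a^2t_k^2 - \tfrac{\lambda^2}{2} - \tfrac{\lambda^2}{2}b^2 - a^2t_k^2 = \tfrac{\lambda^2}{2}(1-b^2) + 2\lambda a t_k .
\]
Halving this gives $\eta_k^+\eta_k^- = \tfrac{\lambda^2}{4}(1-b^2) + \lambda a t_k$, which is \eqref{eq:eigprod}.

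\textbf{Main obstacle.} The delicate point is justifying that the two Frobenius-type trace formulas can be converted into eigenvalue sums. This requires symmetry of $\tilde{Q}_k$, so that $\tr(\tilde{Q}_k^T\tilde{Q}_k)=\sum\eta_i^2$; this is immediate from the symmetric form of \eqref{eq:Qmatrix}.

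It also requires the claim that the remaining $n-2$ eigenvalues equal $\lambda$. This holds when $s_{k-1}$ and $y_{k-1}$ are linearly independent. In the degenerate case where they are parallel, the perturbation has rank at most one. One then defines $\eta_k^\pm$ as the two eigenvalues of the restriction to a two-dimensional invariant subspace containing $s_{k-1}$, one of which is $\lambda$. The same trace computation goes through unchanged.

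As a cross-check, one could compute the $2\times2$ matrix of $\tilde{Q}_k$ on $\mathrm{span}\{s_{k-1},y_{k-1}\}$ directly and take its determinant. The trace route is shorter, however, and reuses the preceding lemmas.
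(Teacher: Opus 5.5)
Your proposal is correct and follows the paper's own argument: you read off $\eta_k^++\eta_k^-$ from Lemma~\ref{lem:trace}, read off $(\eta_k^+)^2+(\eta_k^-)^2=\tfrac{\lambda^2}{2}(1+b^2)+a^2t_k^2$ from Lemma~\ref{lem:tracesq}, and combine them through $2\eta_k^+\eta_k^-=(\eta_k^++\eta_k^-)^2-\bigl((\eta_k^+)^2+(\eta_k^-)^2\bigr)$. Your extra remarks are sound and make explicit what the paper leaves implicit: symmetry of $\tilde{Q}_k$ is what lets $\tr(\tilde{Q}_k^T\tilde{Q}_k)$ be read as a sum of squared eigenvalues, and when $s_{k-1}$ and $y_{k-1}$ are parallel the pair $\eta_k^\pm$ is $\lambda$ and $a\,t_k$, which still satisfies both formulas because then $b^2=1$.
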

\begin{proof}
From Lemma~\ref{lem:trace}, $\lambda(n{-}2) + \eta_k^+ + \eta_k^- = \lambda(n{-}1) + a\,t_k$, giving~\eqref{eq:eigsum}. From Lemma~\ref{lem:tracesq}, $(\eta_k^+)^2 + (\eta_k^-)^2 = \frac{\lambda^2}{2}(1 + b^2) + a^2 t_k^2$. Applying $\eta_k^+\eta_k^- = \frac{1}{2}[(\eta_k^+ + \eta_k^-)^2 - ((\eta_k^+)^2 + (\eta_k^-)^2)]$ yields~\eqref{eq:eigprod}.
\end{proof}

\begin{remark}\label{rem:specialcase1}
Setting $\lambda = 1$ recovers $\eta_k^+\eta_k^- = \frac{1}{4}(1 - b^2) + a\,t_k$, the expression in~\cite{sabi2023modified}.
\end{remark}

\begin{lemma}\label{lem:optimalt}
The value of~$t_k$ minimizing the condition number of~$\tilde{Q}_k$ is
\begin{equation}\label{eq:tstar}
  \tstar = \frac{\lambda}{a} = \lambda\,\frac{s_{k-1}^T y_{k-1}}{\norm{s_{k-1}}^2}.
\end{equation}
\end{lemma}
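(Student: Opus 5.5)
The plan is to reduce the statement to the case $\lambda=1$ treated by Sabi'u et al.~\cite{sabi2023modified}, using a homogeneity argument, and to keep the explicit eigenvalue computation as a check. The starting observation is that $\tilde{Q}_k$ in~\eqref{eq:Qmatrix} is symmetric, because the two rank-one terms $y_{k-1}s_{k-1}^T$ and $s_{k-1}y_{k-1}^T$ enter with equal weights. Its spectrum is therefore real: the eigenvalue $\lambda$ with multiplicity $n-2$, together with $\eta_k^+$ and $\eta_k^-$. Write $\tilde{Q}_k(\lambda,t_k)$ to make the dependence on the parameters explicit. Factoring $\lambda$ out of every term of~\eqref{eq:Qmatrix} gives the identity
\[
  \tilde{Q}_k(\lambda,t_k) \;=\; \lambda\,\tilde{Q}_k\!\left(1,\tfrac{t_k}{\lambda}\right),
\]
and the same scaling shows up in Lemma~\ref{lem:eigenproduct}. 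With $t_k=\lambda w$ we get $\eta_k^+ + \eta_k^- = \lambda(1+aw)$ and $\eta_k^+\eta_k^- = \lambda^2\bigl[\tfrac14(1-b^2)+aw\bigr]$. These are exactly $\lambda$ and $\lambda^2$ times the corresponding quantities for $\lambda=1$ (compare Remark~\ref{rem:specialcase1}).

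Because $\lambda>0$ and the condition number is invariant under positive scaling, $\kappa(\tilde{Q}_k(\lambda,\lambda w)) = \kappa(\tilde{Q}_k(1,w))$ for every $w>0$. Minimizing over $t_k$ is therefore the same as minimizing over $w=t_k/\lambda$ in the case $\lambda=1$. The $\lambda=1$ problem is precisely the one solved in~\cite{sabi2023modified}, whose minimizer is $w^\ast = s_{k-1}^Ty_{k-1}/\norm{s_{k-1}}^2 = 1/a$. Hence $\tstar = \lambda w^\ast = \lambda/a$, which is~\eqref{eq:tstar}.

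To make the argument self-contained rather than relying entirely on the citation, I would redo the $\lambda=1$ minimization from the sum and product in Lemma~\ref{lem:eigenproduct}. When the product $P$ is positive, the ratio $\kappa = \eta_k^+/\eta_k^-$ of the two nonstandard eigenvalues satisfies $\kappa + 1/\kappa + 2 = S^2/P$, where $S$ is the sum. So one minimizes $S^2/P$ as a function of $u = a t_k$. At this point the full condition number also has to be handled, which means comparing $\eta_k^\pm$ with the eigenvalue $\lambda$ of multiplicity $n-2$.

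This is where I expect the main obstacle, and I flag a concrete concern. Treating only the $2\times 2$ ratio and setting the derivative of $(1+u)^2/(\tfrac14(1-b^2)+u)$ to zero gives $u = (1+b^2)/2$, not $u=1$. So the specific value $u=1$, i.e.\ $t_k = 1/a$, must come from the particular criterion used in~\cite{sabi2023modified}. One plausible source is that $u=1$ makes the sum $\eta_k^+ + \eta_k^- = 2\lambda$ centered on the repeated eigenvalue $\lambda$, which balances $\lambda$ against the pair $\eta_k^\pm$. I would adopt that criterion exactly as stated there and verify it, rather than re-derive it. The scaling reduction in the first two paragraphs is rigorous whatever that criterion is, provided it is scale invariant, which any condition-number criterion is.
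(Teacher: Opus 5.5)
\textbf{There is a genuine gap.} Your homogeneity identity $\tilde{Q}_k(\lambda,t_k)=\lambda\,\tilde{Q}_k(1,t_k/\lambda)$ is correct, but it only moves the problem to $\lambda=1$. At $\lambda=1$ the proposal proves nothing: the decisive step is handed to \cite{sabi2023modified} under a criterion you do not identify.

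Your own computation, which is correct, shows that the literal criterion does not give the claimed value. Your worry about the repeated eigenvalue also disappears. From Lemma~\ref{lem:eigenproduct},
$(\eta_k^+-\lambda)(\eta_k^--\lambda)=\eta_k^+\eta_k^- -\lambda(\eta_k^++\eta_k^-)+\lambda^2=-\tfrac{\lambda^2}{4}(b^2-1)\le 0$,
so $\eta_k^-\le\lambda\le\eta_k^+$ always. Whenever $\tilde{Q}_k$ is positive definite, its spectral condition number is therefore exactly $\eta_k^+/\eta_k^-$. That quantity is minimized at $a t_k=\lambda(1+b^2)/2$, not at $a t_k=\lambda$, unless $b=1$. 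So read literally as minimizing the condition number, the statement is false whenever $s_{k-1}$ and $y_{k-1}$ are not parallel, and no citation can repair this. A proof has to name and use a different criterion, and your proposal does not supply one.

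\textbf{The missing idea is the criterion the paper actually uses.} Its proof minimizes the squared spread of the two nontrivial eigenvalues,
$(\eta_k^+-\eta_k^-)^2=(\eta_k^++\eta_k^-)^2-4\eta_k^+\eta_k^-$.
By \eqref{eq:eigsum}--\eqref{eq:eigprod} this equals $(a t_k-\lambda)^2+\lambda^2(b^2-1)$. The second term does not depend on $t_k$, and it is nonnegative by Cauchy--Schwarz, so the eigenvalues are real. The minimizer is therefore $a t_k=\lambda$, that is, $t_k=\lambda/a$.

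This is a one-line computation that needs neither your scaling reduction nor the citation. If you did want the reduction, note that this objective is not scale invariant: it scales like $\lambda^2$. Its minimizer still transforms as your reduction predicts, because only positive homogeneity is needed.

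Your ``centering'' remark lands on the same value, since $a t_k=\lambda$ gives $\eta_k^++\eta_k^-=2\lambda$. As stated, though, it describes the minimizer rather than a quantity being minimized, so it cannot justify it. When you write the proof, say explicitly that the optimized quantity is the eigenvalue gap, a clustering proxy. It is not the condition number, and your calculation shows the two have different minimizers.
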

\begin{proof}
From~\eqref{eq:eigsum}--\eqref{eq:eigprod}, $\eta_k^\pm$ satisfy $\eta^2 - (\lambda + a\,t_k)\eta + \frac{\lambda^2}{4}(1-b^2) + \lambda a\,t_k = 0$, so
\[
  (\eta_k^+ - \eta_k^-)^2 = (a\,t_k - \lambda)^2 + \lambda^2(b^2 - 1).
\]
Since $b \geq 1$ by Cauchy--Schwarz, this is minimized when $t_k = \lambda/a$.
\end{proof}

\begin{remark}
For $\lambda = 1$, we recover $\tstar = s_{k-1}^T y_{k-1}/\norm{s_{k-1}}^2$, the optimal Perry parameter in~\cite{sabi2023modified}.
\end{remark}

To allow $\lambda$ to adapt to the problem and iterate, we define the spectral parameter
\begin{equation}\label{eq:lambdak}
  \lambda_k = \PI{[\alpha_{\min},\, \alpha_{\max}]}\!\left(\max\!\left\{
    \frac{\norm{s_{k-1}}^2}{s_{k-1}^T v_{k-1}},\;
    \frac{s_{k-1}^T v_{k-1}}{\norm{v_{k-1}}^2}
  \right\}\right),
\end{equation}
where $\PI{[a,b]}(x) = \max\{a, \min\{x, b\}\}$ and $0 < \alpha_{\min} \leq \alpha_{\max}$. These are Barzilai--Borwein-type spectral ratios~\cite{barzilai1988two}; the projection keeps $\lambda_k$ bounded.

The \GMOPCGM{} search direction is then
\begin{equation}\label{eq:GMOPCGM}
  p_k = \begin{cases}
    -G_k, & k = 0, \\[6pt]
    -M_k\, G_k + \thetaG\, p_{k-1}, & k \geq 1,
  \end{cases}
\end{equation}
where
\begin{align}
  M_k &= \lambda_k + \thetaG\,\frac{G_k^T p_{k-1}}{\norm{G_k}^2}, \label{eq:Mk}\\[4pt]
  \thetaG &= \frac{(v_{k-1} - \tstar\, s_{k-1})^T G_k}{p_{k-1}^T v_{k-1}}, \label{eq:thetaG}\\[4pt]
  \tstar &= \lambda_k\,\frac{s_{k-1}^T v_{k-1}}{\norm{s_{k-1}}^2}. \label{eq:tstarv}
\end{align}

\begin{lemma}\label{lem:descentGMOP}
Let $\{p_k\}$ and $\{G_k\}$ be generated by Algorithm~\ref{alg:GMOPCGM}. Then
\begin{equation}\label{eq:descentGMOP}
  G_k^T p_k \leq -\alpha_{\min}\norm{G_k}^2.
\end{equation}
\end{lemma}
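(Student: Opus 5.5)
The proof is a direct computation, split into the cases $k=0$ and $k\geq 1$.

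\emph{Case $k=0$.} Here $p_0=-G_0$, so $G_0^Tp_0=-\norm{G_0}^2$. If $\alpha_{\min}\le 1$ this gives $G_0^Tp_0\le-\alpha_{\min}\norm{G_0}^2$ immediately. Otherwise I will check that the algorithm's initialization scales the first direction so that the bound still holds (e.g.\ $p_0=-\lambda_0G_0$ with $\lambda_0\in[\alpha_{\min},\alpha_{\max}]$). This is a convention issue rather than a mathematical one.

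\emph{Case $k\ge1$.} We may assume $G_k\neq 0$, since otherwise the algorithm has stopped and there is nothing to prove. Take the inner product of \eqref{eq:GMOPCGM} with $G_k$ and substitute $M_k$ from \eqref{eq:Mk}:
\[
G_k^Tp_k=-\lambda_k\norm{G_k}^2-\thetaG\,\frac{G_k^Tp_{k-1}}{\norm{G_k}^2}\,\norm{G_k}^2+\thetaG\,G_k^Tp_{k-1}=-\lambda_k\norm{G_k}^2 .
\]
The two $\thetaG$ terms cancel exactly. This cancellation is the entire purpose of the $\thetaG$-dependent part of $M_k$, and it is why the result holds for any value of $\thetaG$, independent of the line search.

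Next, by the definition \eqref{eq:lambdak} of $\lambda_k$ as a projection onto $[\alpha_{\min},\alpha_{\max}]$, we have $\lambda_k\ge\alpha_{\min}$. Therefore $G_k^Tp_k=-\lambda_k\norm{G_k}^2\le-\alpha_{\min}\norm{G_k}^2$.

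The only real obstacle is well-definedness, namely the denominators $\norm{G_k}^2$, $p_{k-1}^Tv_{k-1}$, $s_{k-1}^Tv_{k-1}$ and $\norm{s_{k-1}}^2$. By monotonicity (Assumption~\ref{A2}), $s_{k-1}^Tv_{k-1}=s_{k-1}^Ty_{k-1}+\tau\norm{s_{k-1}}^2\ge\tau\norm{s_{k-1}}^2>0$ whenever $s_{k-1}\ne0$. Since $s_{k-1}=\alpha_{k-1}p_{k-1}$ with $\alpha_{k-1}>0$, this also gives $p_{k-1}^Tv_{k-1}>0$. These facts make $\lambda_k$, $\tstar$ and $\thetaG$ well defined, and the inequality follows without any Lipschitz assumption.
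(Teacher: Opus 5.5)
For $k\ge 1$ your argument is the paper's proof: pairing \eqref{eq:GMOPCGM} with $G_k$ cancels the two $\thetaG$ terms, giving $G_k^Tp_k=-\lambda_k\norm{G_k}^2$, and the clipping in \eqref{eq:lambdak} gives $\lambda_k\ge\alpha_{\min}$. Two of your side remarks are not right as stated, though.

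\textbf{The case $k=0$.} You are right that the paper's proof silently skips this case, but the rescue you propose is not available. Equation \eqref{eq:GMOPCGM} fixes $p_0=-G_0$ with no spectral scaling. Hence $G_0^Tp_0=-\norm{G_0}^2$, and for $\alpha_{\min}>1$ the inequality is false at $k=0$ whenever $G_0\neq 0$. This is a missing hypothesis in the statement, not a convention you can check. The fix is to assume $\alpha_{\min}\le 1$ (the paper uses $\alpha_{\min}=0.1$) or to state the lemma for $k\ge 1$ only.

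\textbf{Well-definedness.} Your paragraph on this uses monotonicity incorrectly. Here $y_{k-1}=G(x_k)-G(x_{k-1})$ but $s_{k-1}=z_{k-1}-x_{k-1}$. Monotonicity controls $(x_k-x_{k-1})^Ty_{k-1}$, or $(z_{k-1}-x_{k-1})^T(G(z_{k-1})-G(x_{k-1}))$. It says nothing about the mixed product $s_{k-1}^Ty_{k-1}$. So neither $s_{k-1}^Tv_{k-1}\ge\tau\norm{s_{k-1}}^2$ nor $p_{k-1}^Tv_{k-1}>0$ follows. The paper's remark after \eqref{eq:Qmatrix} makes the same slip, but its proof of this lemma does not use it.

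You do not need positivity anyway. The projection onto $[\alpha_{\min},\alpha_{\max}]$ gives $\lambda_k\ge\alpha_{\min}$ whatever the signs of the Barzilai--Borwein ratios, and the cancellation holds for any real $\thetaG$. Only nonvanishing denominators matter:
\begin{itemize}
\item $\norm{G_k}^2>0$ inside the loop.
\item $\norm{s_{k-1}}^2>0$, since $\alpha_{k-1}>0$ and $p_{k-1}\neq 0$ (by $p_0=-G_0$, or by the descent identity at step $k-1$).
\item Nothing guarantees $s_{k-1}^Tv_{k-1}\neq 0$, $p_{k-1}^Tv_{k-1}\neq 0$ or $v_{k-1}\neq 0$. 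These are presupposed by the hypothesis that the sequences are generated by Algorithm~\ref{alg:GMOPCGM}, exactly as in the paper.
\end{itemize}
So the identity uses no assumption on $G$ at all, not merely no Lipschitz continuity. Drop the monotonicity argument rather than rely on it.
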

\begin{proof}
Multiplying~\eqref{eq:GMOPCGM} by $G_k^T$:
\[
  G_k^T p_k
  = -\lambda_k\norm{G_k}^2
    - \thetaG\,\frac{G_k^T p_{k-1}}{\norm{G_k}^2}\,\norm{G_k}^2
    + \thetaG\, G_k^T p_{k-1}
  = -\lambda_k\norm{G_k}^2.
\]
Since $\lambda_k \geq \alpha_{\min}$ by~\eqref{eq:lambdak}, the result follows.
\end{proof}

The descent condition~\eqref{eq:descentGMOP} is independent of~$\tstar$.

\begin{algorithm}[ht]
\caption{Generalized Modified Optimal Perry Conjugate Gradient Method (\GMOPCGM)}
\label{alg:GMOPCGM}
\begin{algorithmic}[1]
  \State Choose $x_0 \in \R^n$, $\varepsilon > 0$, $\rho \in (0,1)$, $\beta > 0$, $\zeta > 0$, $\tau > 0$,
  \Statex \hspace{1.5em} $0 < \alpha_{\min} \leq \alpha_{\max}$, $0 < \zeta_1 \leq \zeta_2$, $\gamma \in (0,2)$. Set $k \gets 0$.
  \While{$\norm{G_k} > \varepsilon$}
    \State Compute $p_k$ by~\eqref{eq:GMOPCGM}.
    \State Find $\alpha_k = \max\{\rho^i \beta : i = 0,1,2,\ldots\}$ satisfying
    \begin{equation}\label{eq:linesearch1}
      G(x_k + \alpha_k p_k)^T p_k \leq -\zeta\,\alpha_k\norm{p_k}^2\,\PI{[\zeta_1,\zeta_2]}(\norm{G(x_k + \alpha_k p_k)}).
    \end{equation}
    \State Set $z_k \gets x_k + \alpha_k p_k$.
    \If{$z_k \in \Gamma$ and $\norm{G(z_k)} \leq \varepsilon$}
      \State \Return $x^* \gets z_k$.
    \EndIf
    \State Set $\mu_k \gets \frac{G(z_k)^T(x_k - z_k)}{\norm{G(z_k)}^2}$ and $x_{k+1} \gets \PG(x_k - \gamma\mu_k G(z_k))$.
    \State Compute $s_k$, $v_k$, $\thetaG$, $\tstar$ by~\eqref{eq:thetaG}--\eqref{eq:tstarv}.
    \State Update $\lambda_{k+1}$ by~\eqref{eq:lambdak}.
    \State $k \gets k+1$.
  \EndWhile
  \State \Return $x^* \gets x_k$.
\end{algorithmic}
\end{algorithm}

We now present the second method, which generalizes the \CGPM{} of Zheng et al.~\cite{zheng2020conjugate}. The Hager--Zhang parameter~\cite{hager2005new} is a special case of the Dai--Liao parameter~\cite{dai2012another} with $t = 2\norm{y_{k-1}}^2/(s_{k-1}^T y_{k-1})$. We generalize this to
\begin{equation}\label{eq:newtk}
  t_k = \lambda\,\frac{\norm{y_{k-1}}^2}{s_{k-1}^T y_{k-1}}, \qquad \lambda > 0.
\end{equation}
The \GCGPM{} search direction is
\begin{equation}\label{eq:GCGPM}
  p_k = \begin{cases}
    -G_k, & k = 0, \\[6pt]
    -\lambda_k\, G_k + \thetaC\, p_{k-1} + \tau\, a_k\, w_{k-1}, & k \geq 1,
  \end{cases}
\end{equation}
with the following auxiliary quantities:
\begin{align}
  w_{k-1} &= y_{k-1} + r_k\, p_{k-1}, &
  r_k &= 1 + \max\!\left\{0,\, -\frac{G_k^T p_{k-1}}{y_{k-1}^T p_{k-1}}\right\}, \label{eq:wk}\\[4pt]
  a_k &= \frac{G_k^T p_{k-1}}{w_{k-1}^T p_{k-1}}, &
  \lambda_k &= \PI{[\alpha_{\min},\alpha_{\max}]}\!\left(\max\!\left\{
    \frac{\norm{w_{k-1}}^2}{p_{k-1}^T w_{k-1}},\;
    \frac{p_{k-1}^T w_{k-1}}{\norm{p_{k-1}}^2}
  \right\}\right), \label{eq:lambdak2}\\[4pt]
  \thetaC &= \frac{G_k^T w_{k-1}}{p_{k-1}^T w_{k-1}}
    - \lambda_k\,\frac{\norm{w_{k-1}}^2}{p_{k-1}^T w_{k-1}}
    \cdot\frac{G_k^T p_{k-1}}{p_{k-1}^T w_{k-1}}. \label{eq:thetaC}
\end{align}

\begin{lemma}\label{lem:descentGCGPM}
Let $\{p_k\}$ and $\{G_k\}$ be generated by Algorithm~\ref{alg:GCGPM} with $0 \leq \tau \leq 1$ and $\alpha_{\min} > (1{+}\tau)/2$. Then
\begin{equation}\label{eq:descentGCGPM}
  G_k^T p_k \;\leq\; -\alpha_{\min}\!\left(1 - \frac{(1+\tau)^2}{4\alpha_{\min}^2}\right)\norm{G_k}^2.
\end{equation}
\end{lemma}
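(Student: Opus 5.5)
The plan is to compute $G_k^T p_k$ directly from~\eqref{eq:GCGPM}, collect the terms involving $G_k^T p_{k-1}$ into a quadratic form, and bound that form by completing the square. This mirrors the Hager--Zhang descent argument, with $\lambda_k$ now weighting both the diagonal term and the square term. For brevity write $g=G_k$, $p=p_{k-1}$, $w=w_{k-1}$, $u=g^Tp$, $q=g^Tw$ and $d=p^Tw$.

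\textbf{Step 1: sign of $d$.} First I would verify that the denominator $d$ is positive, so that $a_k$, $\thetaC$ and $\lambda_k$ are well defined. This uses $w=y_{k-1}+r_kp$, so $d=y_{k-1}^Tp+r_k\norm{p}^2$, together with the choice of $r_k$ in~\eqref{eq:wk}: the $\max\{0,\cdot\}$ correction is designed to push $w^Tp$ above $\norm{p}^2$-order quantities. I expect this to be the main obstacle, because $y_{k-1}^Tp$ is not controlled by monotonicity when $p$ is not parallel to $x_k-x_{k-1}$. If the definition of $r_k$ as written does not make $d>0$ immediate, I would fall back on the convention (standard in~\cite{zheng2020conjugate}) that $r_k$ is taken large enough that $w^Tp\ge\norm{p}^2>0$. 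Positivity of $d$ is all that is used below.

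\textbf{Step 2: expand $G_k^Tp_k$ for $k\ge1$.} Multiplying~\eqref{eq:GCGPM} by $g^T$ gives
\[
g^Tp_k=-\lambda_k\norm{g}^2+\thetaC\,u+\tau a_k q .
\]
Substituting~\eqref{eq:thetaC} gives $\thetaC u = uq/d-\lambda_k\norm{w}^2u^2/d^2$, and by~\eqref{eq:lambdak2} we have $\tau a_kq=\tau uq/d$. Hence
\[
g^Tp_k=-\lambda_k\norm{g}^2+(1+\tau)\frac{uq}{d}-\lambda_k\frac{\norm{w}^2u^2}{d^2}.
\]

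\textbf{Step 3: complete the square.} By Cauchy--Schwarz, $uq\le|u|\,\norm{g}\,\norm{w}$. Setting $X=\norm{w}|u|/d\ge0$, this yields
\[
g^Tp_k\le-\lambda_k\norm{g}^2+(1+\tau)\norm{g}X-\lambda_kX^2 .
\]
The right-hand side is a concave quadratic in $X$, and its maximum over $X$ gives
\[
g^Tp_k\le-\Big(\lambda_k-\frac{(1+\tau)^2}{4\lambda_k}\Big)\norm{g}^2 .
\]
The function $\lambda\mapsto\lambda-(1+\tau)^2/(4\lambda)$ is increasing on $(0,\infty)$, and $\lambda_k\ge\alpha_{\min}$ by the projection in~\eqref{eq:lambdak2}. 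So the coefficient is at least $\alpha_{\min}\big(1-(1+\tau)^2/(4\alpha_{\min}^2)\big)$, which is exactly the constant in~\eqref{eq:descentGCGPM}. The hypothesis $\alpha_{\min}>(1+\tau)/2$ makes this constant positive.

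\textbf{Step 4: the case $k=0$.} Here $g^Tp_0=-\norm{g}^2$, so~\eqref{eq:descentGCGPM} requires $\alpha_{\min}-(1+\tau)^2/(4\alpha_{\min})\le1$. This holds, for instance, whenever $\alpha_{\min}\le1$: combined with $\tau\ge0$, the left-hand side is then at most $\alpha_{\min}\le 1$. If larger $\alpha_{\min}$ is intended, I would instead start with $p_0=-\lambda_0G_0$ for some $\lambda_0\in[\alpha_{\min},\alpha_{\max}]$, or record this mild restriction as an extra hypothesis. This is the second point I would check against the authors' conventions.
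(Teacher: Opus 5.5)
Your proposal is correct and follows the paper's proof essentially step for step. You expand $G_k^Tp_k$ to $-\lambda_k\|G_k\|^2+(1+\tau)uq/d-\lambda_k\|w\|^2u^2/d^2$, absorb the cross term by AM--GM (equivalently, your completing the square), and use $\lambda_k\ge\alpha_{\min}$ together with the monotonicity of $\lambda\mapsto\lambda-(1+\tau)^2/(4\lambda)$, which the paper leaves implicit. Since the paper bounds the cross term using $|p_{k-1}^Tw_{k-1}|$, the argument only needs $d\neq 0$, not $d>0$.

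Your two caveats are also well founded. The paper's ``immediate'' $k=0$ case in fact requires $\alpha_{\min}-(1+\tau)^2/(4\alpha_{\min})\le 1$, which holds for the experimental choice $\alpha_{\min}=0.55$ but not for arbitrary $\alpha_{\min}$. And with $r_k$ as printed, the paper never verifies that $p_{k-1}^Tw_{k-1}\neq 0$.
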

\begin{proof}
The case $k = 0$ is immediate. For $k \geq 1$, multiplying~\eqref{eq:GCGPM} by $G_k^T$ and expanding:
\begin{align*}
  G_k^T p_k &= -\lambda_k\norm{G_k}^2
    + (1{+}\tau)\,\frac{G_k^T w_{k-1}}{p_{k-1}^T w_{k-1}}\,G_k^T p_{k-1}
    - \lambda_k\,\frac{\norm{w_{k-1}}^2(G_k^T p_{k-1})^2}{(p_{k-1}^T w_{k-1})^2}.
\end{align*}
Applying the AM--GM inequality $2uv \leq u^2 + v^2$ with
\[
  u = \frac{(1{+}\tau)}{\sqrt{2\lambda_k}}\,\norm{G_k}\,|p_{k-1}^T w_{k-1}|, \qquad
  v = \sqrt{2\lambda_k}\,\norm{w_{k-1}}\,|G_k^T p_{k-1}|,
\]
the cross term is bounded by
\[
  \frac{(1{+}\tau)^2}{4\lambda_k}\norm{G_k}^2
  + \lambda_k\,\frac{\norm{w_{k-1}}^2(G_k^T p_{k-1})^2}{(p_{k-1}^T w_{k-1})^2}.
\]
After cancellation and $\lambda_k \geq \alpha_{\min}$, we obtain~\eqref{eq:descentGCGPM}.
\end{proof}

\begin{algorithm}[ht]
\caption{Generalized Conjugate Gradient Projection Method (\GCGPM)}
\label{alg:GCGPM}
\begin{algorithmic}[1]
  \State Choose $x_0 \in \R^n$, $\varepsilon > 0$, $\rho \in (0,1)$, $\eta > 0$, $\zeta > 0$, $\tau > 0$,
  \Statex \hspace{1.5em} $0 < \alpha_{\min} \leq \alpha_{\max}$, $0 < \zeta_1 \leq \zeta_2$, $\gamma \in (0,2)$. Set $k \gets 0$.
  \While{$\norm{G_k} > \varepsilon$}
    \State Compute $\thetaC$ by~\eqref{eq:thetaC} and $p_k$ by~\eqref{eq:GCGPM}.
    \State Find $\alpha_k = \max\{\rho^i \eta : i = 0,1,2,\ldots\}$ satisfying
    \begin{equation}\label{eq:linesearch2}
      G(x_k + \alpha_k p_k)^T p_k \leq -\zeta\,\alpha_k\norm{p_k}^2\,\PI{[\zeta_1,\zeta_2]}(\norm{G(x_k + \alpha_k p_k)}).
    \end{equation}
    \State Set $z_k \gets x_k + \alpha_k p_k$.
    \If{$z_k \in \Gamma$ and $\norm{G(z_k)} \leq \varepsilon$}
      \State \Return $x^* \gets z_k$.
    \EndIf
    \State Set $\mu_k \gets \frac{G(z_k)^T(x_k - z_k)}{\norm{G(z_k)}^2}$ and $x_{k+1} \gets \PG(x_k - \gamma\mu_k G(z_k))$.
    \State Compute $w_k$, $\thetaC$ by~\eqref{eq:wk}--\eqref{eq:thetaC}.
    \State Update $\lambda_{k+1}$ by~\eqref{eq:lambdak2}.
    \State $k \gets k+1$.
  \EndWhile
  \State \Return $x^* \gets x_k$.
\end{algorithmic}
\end{algorithm}

\section{Convergence analysis}\label{sec:convergence}

We first establish the convergence properties of \GMOPCGM{} (Algorithm~\ref{alg:GMOPCGM}).

\begin{lemma}\label{lem:stepsizeGMOP}
Suppose Assumption~\ref{A3} holds. Then the step size in Algorithm~\ref{alg:GMOPCGM} satisfies
\[
  \alpha_k \geq \min\!\left\{\beta,\; \frac{\alpha_{\min}\rho}{L + \zeta\zeta_2}\cdot\frac{\norm{G_k}^2}{\norm{p_k}^2}\right\}.
\]
\end{lemma}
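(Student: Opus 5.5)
The argument is the standard backtracking one. If $\alpha_k = \beta$, the bound holds trivially. Otherwise $\alpha_k < \beta$, so the trial step $\alpha_k' = \alpha_k/\rho$ was rejected by the line search~\eqref{eq:linesearch1}. Writing $z_k' = x_k + \alpha_k' p_k$, this rejection means
\[
  G(z_k')^T p_k > -\zeta\,\alpha_k'\norm{p_k}^2\,\PI{[\zeta_1,\zeta_2]}(\norm{G(z_k')}) \geq -\zeta\zeta_2\,\alpha_k'\norm{p_k}^2,
\]
where the last step uses $\PI{[\zeta_1,\zeta_2]}(\cdot)\le \zeta_2$. The plan is to combine this with the sufficient descent property of Lemma~\ref{lem:descentGMOP} and with Assumption~\ref{A3}.

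The lower bound on $\alpha_k'$ comes from a chain of inequalities. Lemma~\ref{lem:descentGMOP} gives $-G_k^Tp_k \ge \alpha_{\min}\norm{G_k}^2$. Hence
\[
  \alpha_{\min}\norm{G_k}^2 \le -G_k^Tp_k = (G(z_k')-G_k)^Tp_k - G(z_k')^Tp_k.
\]
By Cauchy--Schwarz and Lipschitz continuity, $(G(z_k')-G_k)^Tp_k \le L\norm{z_k'-x_k}\norm{p_k} = L\alpha_k'\norm{p_k}^2$. The rejected-step inequality gives $-G(z_k')^Tp_k < \zeta\zeta_2\alpha_k'\norm{p_k}^2$. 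Adding these yields $\alpha_{\min}\norm{G_k}^2 < (L+\zeta\zeta_2)\alpha_k'\norm{p_k}^2$. Substituting $\alpha_k = \rho\alpha_k'$ gives $\alpha_k > \frac{\alpha_{\min}\rho}{L+\zeta\zeta_2}\frac{\norm{G_k}^2}{\norm{p_k}^2}$. Taking the minimum over the two cases gives the stated bound.

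The only subtle point is checking that the situation is well-posed. While the algorithm runs, $\norm{G_k}>\varepsilon>0$, and then Lemma~\ref{lem:descentGMOP} forces $p_k\neq 0$, so the ratio $\norm{G_k}^2/\norm{p_k}^2$ makes sense. We also need the line search to terminate finitely, so that $\alpha_k$ is well defined; the same displayed inequality shows this. If every trial step were rejected, we would have $\alpha_{\min}\norm{G_k}^2 < (L+\zeta\zeta_2)\rho^i\beta\norm{p_k}^2$ for all $i$, and letting $i\to\infty$ gives $\norm{G_k}=0$, a contradiction. I would state this finite-termination remark before the main bound.
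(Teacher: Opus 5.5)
Your proof is correct and follows the paper's argument essentially line for line. You reject the step $\alpha_k/\rho$, bound $-G_k^Tp_k$ from below by sufficient descent, and bound it from above by the Lipschitz estimate plus the violated line-search inequality with $\PI{[\zeta_1,\zeta_2]}(\cdot)\le\zeta_2$. Your extra remarks on $p_k\neq 0$ and on finite termination of the backtracking are valid, and they supply well-posedness details the paper leaves implicit.
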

\begin{proof}
If $\alpha_k = \beta$, the bound is trivial. Otherwise, $\tilde{\alpha}_k = \alpha_k/\rho$ violates~\eqref{eq:linesearch1}, so
\[
  G(x_k + \tilde{\alpha}_k p_k)^T p_k > -\zeta\tilde{\alpha}_k\norm{p_k}^2\,\PI{[\zeta_1,\zeta_2]}(\norm{G(x_k + \tilde{\alpha}_k p_k)}).
\]
Using the sufficient descent condition~\eqref{eq:descentGMOP} and the Lipschitz continuity of~$G$,
\begin{align*}
  \alpha_{\min}\norm{G_k}^2 &\leq -G_k^T p_k = (G(x_k + \tilde{\alpha}_k p_k) - G_k)^T p_k - G(x_k + \tilde{\alpha}_k p_k)^T p_k \\
  &\leq L\tilde{\alpha}_k\norm{p_k}^2 + \zeta\tilde{\alpha}_k\norm{p_k}^2\zeta_2
  = \frac{\alpha_k}{\rho}(L + \zeta\zeta_2)\norm{p_k}^2. \qedhere
\end{align*}
\end{proof}

\begin{lemma}\label{lem:trustregionGMOP}
Suppose Assumptions~\ref{A1}--\ref{A3} hold. Then the direction~$p_k$ satisfies the trust region property
\[
  \alpha_{\min}\norm{G_k} \leq \norm{p_k} \leq \kappa\norm{G_k},
\]
where $\kappa = \alpha_{\max} + 2(1+\alpha_{\max})\frac{L\gamma + \tau}{\tau}$.
\end{lemma}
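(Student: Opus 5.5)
The plan is to prove the two inequalities separately. The lower bound follows from the sufficient descent property. The upper bound comes from splitting $p_k$ into a spectral part and a conjugate-gradient correction, then bounding the ratio $|\thetaG|\,\norm{p_{k-1}}/\norm{G_k}$ by a constant.

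\emph{Lower bound.} By Lemma~\ref{lem:descentGMOP} and Cauchy--Schwarz,
\[
\alpha_{\min}\norm{G_k}^2 \le -G_k^Tp_k \le \norm{G_k}\,\norm{p_k}.
\]
Dividing by $\norm{G_k}>0$ (the algorithm has not stopped) gives $\norm{p_k}\ge\alpha_{\min}\norm{G_k}$. For $k=0$ this is immediate, since $\norm{p_0}=\norm{G_0}$ and we may take $\alpha_{\min}\le 1$, or simply note that $\kappa\ge 1$.

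\emph{Upper bound.} For $k\ge1$ I will rewrite~\eqref{eq:GMOPCGM} using~\eqref{eq:Mk} as
\[
p_k=-\lambda_kG_k+\thetaG\Bigl(p_{k-1}-\frac{G_k^Tp_{k-1}}{\norm{G_k}^2}G_k\Bigr).
\]
This gives $\norm{p_k}\le \alpha_{\max}\norm{G_k}+2|\thetaG|\,\norm{p_{k-1}}$. It remains to show $|\thetaG|\,\norm{p_{k-1}}\le (1+\alpha_{\max})\frac{L\gamma+\tau}{\tau}\norm{G_k}$. I will use $p_{k-1}=s_{k-1}/\alpha_{k-1}$, which turns the ratio into
\[
|\thetaG|\,\norm{p_{k-1}}\le \frac{(\norm{v_{k-1}}+\tstar\norm{s_{k-1}})\,\norm{G_k}\,\norm{s_{k-1}}}{|s_{k-1}^Tv_{k-1}|}.
\]
This expression is invariant to the scaling by $\alpha_{k-1}$.

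\emph{Key estimates.}
\begin{enumerate}[label=(\roman*), nosep]
\item To bound $\norm{y_{k-1}}$ by $\norm{s_{k-1}}$, recall that $y_{k-1}=G(x_k)-G(x_{k-1})$ while $s_{k-1}=z_{k-1}-x_{k-1}$. Since $x_{k-1}\in\Gamma$ (for $k-1\ge1$), non-expansiveness of $\PG$ and the definition of $\mu_{k-1}$ give
\[
\norm{x_k-x_{k-1}}\le\gamma|\mu_{k-1}|\,\norm{G(z_{k-1})}\le\gamma\norm{s_{k-1}}.
\]
Assumption~\ref{A3} then yields $\norm{y_{k-1}}\le L\gamma\norm{s_{k-1}}$, and hence $\norm{v_{k-1}}\le(L\gamma+\tau)\norm{s_{k-1}}$.
\item By~\eqref{eq:tstarv}, $\tstar\norm{s_{k-1}}=\lambda_k\, s_{k-1}^Tv_{k-1}/\norm{s_{k-1}}\le\alpha_{\max}\norm{v_{k-1}}$.
\item For the denominator I will use $s_{k-1}^Tv_{k-1}=s_{k-1}^Ty_{k-1}+\tau\norm{s_{k-1}}^2\ge\tau\norm{s_{k-1}}^2$. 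This relies on $s_{k-1}^Ty_{k-1}\ge0$, which is asserted in Section~\ref{sec:methods} as a consequence of monotonicity (Assumption~\ref{A2}).
\end{enumerate}
Combining these gives $|\thetaG|\,\norm{p_{k-1}}\le(1+\alpha_{\max})\frac{L\gamma+\tau}{\tau}\norm{G_k}$, which is exactly the constant $\kappa$.

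\emph{Main obstacle.} The delicate step is (iii). Monotonicity directly gives $(x_k-x_{k-1})^T y_{k-1}\ge0$, not $s_{k-1}^Ty_{k-1}\ge0$, because $s_{k-1}$ uses the trial point $z_{k-1}$ rather than $x_k$. I would first try to rely on the paper's stated convention that $s_{k-1}^Ty_{k-1}>0$. Failing that, I would try to relate $s_{k-1}$ to $x_k-x_{k-1}$ via the projection step, or to absorb a possibly negative $s^Ty$ by using the safeguard role of $\tau$. If neither works, the bound holds as stated only under the paper's convention that the denominator satisfies $p_{k-1}^Tv_{k-1}\ge\tau\norm{s_{k-1}}^2/\alpha_{k-1}$.
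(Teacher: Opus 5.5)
Your argument follows the paper's proof step for step: the same splitting $\norm{p_k}\le\alpha_{\max}\norm{G_k}+2|\thetaG|\,\norm{p_{k-1}}$, the same estimates $\norm{v_{k-1}}\le(L\gamma+\tau)\norm{s_{k-1}}$ and $|\tstar|\le\alpha_{\max}(L\gamma+\tau)$, and the same constant $\kappa$. Your step (i) makes explicit the $L\gamma$ factor that the paper only asserts.

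The obstacle you flag in (iii) is genuine, and you did not miss anything relative to the paper. The paper's bound $|\thetaG|\le(1+\alpha_{\max})(L\gamma+\tau)\norm{G_k}/(\tau\norm{p_{k-1}})$ silently uses $s_{k-1}^Tv_{k-1}\ge\tau\norm{s_{k-1}}^2$, i.e.\ $s_{k-1}^Ty_{k-1}\ge0$. Its only support is the remark in Section~\ref{sec:methods} that this follows from monotonicity. It does not follow, and it can fail. Take:
\begin{itemize}
\item $\Gamma=\R^2$ and $G(x)=Ax$ with $A=\mathrm{diag}(1,100)$, so Assumptions~\ref{A1}--\ref{A3} hold;
\item $x_0=(1000,1)^T$;
\item the \GMOPCGM{} parameters of Table~\ref{tab:params}.
\end{itemize}
Then $p_0=(-1000,-100)^T$. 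The first trial $\alpha_0=\beta=0.5$ gives $z_0=(500,-49)^T$ and $G(z_0)^Tp_0=-10^4\le-\zeta\alpha_0\norm{p_0}^2=-50.5$, so it is accepted, and $\mu_0>0$. Since $x_1=x_0-\gamma\mu_0Az_0$, we get $y_0=-\gamma\mu_0A^2z_0$. Hence $s_0^Ty_0=-\gamma\mu_0\alpha_0\,p_0^TA^2z_0=-\gamma\mu_0\alpha_0\cdot 4.85\times10^{7}<0$. So $s_0^Tv_0<\tau\norm{s_0}^2$, and estimate (iii) is false at $k=1$. The underlying reason: $x_k-x_{k-1}$ is parallel to $G(z_{k-1})$, not to $s_{k-1}$, so monotonicity says nothing about the sign of $s_{k-1}^Ty_{k-1}$.

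Under Assumption~\ref{A3}, what does survive is the two-sided bound $|s_{k-1}^Ty_{k-1}|\le L\gamma\norm{s_{k-1}}^2$ from your step (i). This gives $s_{k-1}^Tv_{k-1}\ge(\tau-L\gamma)\norm{s_{k-1}}^2$. If you additionally assume $\tau>L\gamma$, your proof goes through verbatim with $\tau-L\gamma$ in place of $\tau$ in the denominator of $\kappa$, and it also guarantees that $\thetaG$ and $\lambda_k$ are well defined. This is the ``safeguard'' route you mention, but it changes both the hypotheses and the constant. Alternatively, defining $y_{k-1}=G(z_{k-1})-G(x_{k-1})$ would make (iii) a true consequence of monotonicity and give $\norm{y_{k-1}}\le L\norm{s_{k-1}}$ directly, at the cost of altering the method.

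Three smaller points, which the paper also glosses over:
\begin{itemize}
\item Step (i) needs $x_{k-1}\in\Gamma$. This can fail at $k=1$, since the algorithm allows $x_0\in\R^n$, so either assume $x_0\in\Gamma$ or treat $k=1$ separately.
\item The lower bound at $k=0$ (indeed Lemma~\ref{lem:descentGMOP} itself at $k=0$) requires $\alpha_{\min}\le1$.
\item In (ii) you should write $|\tstar|$, since $\tstar$ carries the sign of $s_{k-1}^Tv_{k-1}$.
\end{itemize}
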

\begin{proof}
The lower bound follows directly from~\eqref{eq:descentGMOP} and the Cauchy--Schwarz inequality: $\alpha_{\min}\norm{G_k}^2 \leq |G_k^T p_k| \leq \norm{G_k}\norm{p_k}$.

For the upper bound, note that $s_{k-1} = \alpha_{k-1} p_{k-1}$ and $\norm{v_{k-1}} \leq (L\gamma + \tau)\norm{s_{k-1}}$. It follows that $|t_k^*| = \lambda_k\frac{|v_{k-1}^T s_{k-1}|}{\norm{s_{k-1}}^2} \leq \alpha_{\max}(L\gamma + \tau)$, and consequently
\[
  |\thetaG| \leq (1+\alpha_{\max})\frac{(L\gamma + \tau)\norm{G_k}}{\tau\norm{p_{k-1}}}.
\]
Substituting into $\norm{p_k} \leq \alpha_{\max}\norm{G_k} + 2|\thetaG|\norm{p_{k-1}}$ yields the result.
\end{proof}

\begin{lemma}\label{lem:limGMOP}
Suppose Assumptions~\ref{A1} and~\ref{A2} hold. Then $\lim_{k \to \infty} \alpha_k\norm{p_k} = 0$.
\end{lemma}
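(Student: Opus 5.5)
The plan is the standard Solodov--Svaiter Fejér-monotonicity argument. It uses only monotonicity, the projection property~(c), and the line search~\eqref{eq:linesearch1}. Fix $x^*\in S^*$, which exists by Assumption~\ref{A1}. By monotonicity (Assumption~\ref{A2}), $G(z_k)^T(z_k - x^*) \ge G(x^*)^T(z_k-x^*) = 0$. Hence
\[
  G(z_k)^T(x_k - x^*) \;\ge\; G(z_k)^T(x_k - z_k) = \mu_k\norm{G(z_k)}^2 .
\]
Moreover, \eqref{eq:linesearch1} gives $G(z_k)^T(x_k-z_k) = -\alpha_k G(z_k)^Tp_k \ge \zeta\zeta_1\alpha_k^2\norm{p_k}^2 > 0$, so $\mu_k>0$.

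Since $x^*\in\Gamma$, property~(c) of the projection, followed by expanding the square, gives
\[
  \norm{x_{k+1}-x^*}^2 \le \norm{x_k - \gamma\mu_k G(z_k) - x^*}^2
  = \norm{x_k-x^*}^2 - 2\gamma\mu_k G(z_k)^T(x_k-x^*) + \gamma^2\mu_k^2\norm{G(z_k)}^2 .
\]
Inserting the lower bound from the first paragraph yields
\[
  \norm{x_{k+1}-x^*}^2 \le \norm{x_k-x^*}^2 - \gamma(2-\gamma)\frac{(G(z_k)^T(x_k-z_k))^2}{\norm{G(z_k)}^2}.
\]
Using $G(z_k)^T(x_k-z_k)\ge \zeta\zeta_1\alpha_k^2\norm{p_k}^2$, the subtracted term is at least
\[
  \gamma(2-\gamma)\,\zeta^2\zeta_1^2\,\frac{\alpha_k^4\norm{p_k}^4}{\norm{G(z_k)}^2}.
\]
As $\gamma\in(0,2)$, the sequence $\norm{x_k-x^*}$ is nonincreasing. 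In particular $\{x_k\}$ is bounded, and summing the inequality gives
\[
  \sum_k \frac{\alpha_k^4\norm{p_k}^4}{\norm{G(z_k)}^2} < \infty .
\]

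The main obstacle is that we cannot invoke Lemma~\ref{lem:trustregionGMOP} or Assumption~\ref{A3} to bound $\norm{G(z_k)}$, so the denominator must be handled some other way. The key is the truncation $\PI{[\zeta_1,\zeta_2]}$ in the line search together with continuity of $G$ (which the paper presupposes). From~\eqref{eq:linesearch1} and Cauchy--Schwarz we get $\zeta\zeta_1\alpha_k\norm{p_k}^2 \le \norm{G(z_k)}\norm{p_k}$, i.e.\ $\alpha_k\norm{p_k}\le \norm{G(z_k)}/(\zeta\zeta_1)$. This controls $\norm{z_k}$ once $G(z_k)$ is bounded, but it is circular as stated. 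I would therefore first bound $\{z_k\}$ directly.

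I would try two routes to this bound. The first is a contradiction argument: if $\alpha_k\norm{p_k}$ were unbounded along a subsequence, then since $x_k$ is bounded and $z_k=x_k+\alpha_k p_k$, use monotonicity along the ray, i.e.\ $G(z_k)^T p_k \ge G(x_k)^Tp_k$ plus growth estimates, to contradict $G(z_k)^Tp_k \le -\zeta\zeta_1\alpha_k\norm{p_k}^2$. The second, simpler route first bounds $\norm{p_k}$ in terms of $\norm{G_k}$, which is bounded by continuity on the bounded set containing $\{x_k\}$, and then uses $\alpha_k\le\beta$. Once $\{z_k\}$ is bounded, continuity gives $\norm{G(z_k)}\le C$. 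Then $\sum_k \alpha_k^4\norm{p_k}^4 \le C^2\sum_k \alpha_k^4\norm{p_k}^4/\norm{G(z_k)}^2<\infty$, which forces $\alpha_k\norm{p_k}\to 0$. I expect the boundedness of $\{z_k\}$ (equivalently of $\{G(z_k)\}$) without Lipschitz continuity to be the delicate step. If neither route closes cleanly, I will fall back on the ray-monotonicity contradiction argument.
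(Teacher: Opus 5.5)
Your Fejér-monotonicity skeleton is correct: monotonicity, property~(c), the lower truncation $\PI{[\zeta_1,\zeta_2]}(\cdot)\ge\zeta_1$, and the resulting bound $\sum_k \alpha_k^4\norm{p_k}^4/\norm{G(z_k)}^2<\infty$ are all sound. But the proof is not finished, because you leave the boundedness of $\{G(z_k)\}$ open and only one of your two routes can close it. The second route fails under Assumptions~\ref{A1}--\ref{A2}. A bound $\norm{p_k}\le\kappa\norm{G_k}$ is exactly Lemma~\ref{lem:trustregionGMOP}, whose proof rests on $\norm{v_{k-1}}\le(L\gamma+\tau)\norm{s_{k-1}}$, that is, on Assumption~\ref{A3}. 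Without it nothing controls $\thetaG$.

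The first route works, and it needs neither a contradiction nor growth estimates. Since $z_k-x_k=\alpha_k p_k$ with $\alpha_k>0$, monotonicity gives $G(z_k)^Tp_k\ge G_k^Tp_k$. Combining this with \eqref{eq:linesearch1} and the truncation bound gives
\[
  \zeta\zeta_1\alpha_k\norm{p_k}^2\;\le\; -G(z_k)^Tp_k\;\le\; -G_k^Tp_k\;\le\;\norm{G_k}\,\norm{p_k}.
\]
Hence $\norm{z_k-x_k}=\alpha_k\norm{p_k}\le\norm{G_k}/(\zeta\zeta_1)$, where you divide by $\norm{p_k}\ge\alpha_{\min}\norm{G_k}>0$. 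Boundedness of $\{x_k\}$ and continuity give $\sup_k\norm{G_k}<\infty$, so $\{z_k\}$ is bounded and $\norm{G(z_k)}\le C$. Your summability estimate then yields $\sum_k\alpha_k^4\norm{p_k}^4<\infty$, and the lemma follows.

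The paper does not take this step. It reads \eqref{eq:linesearch1} as $G(z_k)^Tp_k\le-\zeta\alpha_k\norm{p_k}^2\norm{G(z_k)}$, so that $\norm{G(z_k)}^2$ cancels in the Fejér inequality and $\sum_k\norm{x_k-z_k}^4<\infty$ follows directly. However, $\PI{[\zeta_1,\zeta_2]}(t)=\zeta_2<t$ for $t>\zeta_2$, so that reading is valid only when $\norm{G(z_k)}\le\zeta_2$. With the experimental choice $\zeta_1=\zeta_2=1$, the line search does not involve $\norm{G(z_k)}$ at all. Your choice to use only the lower truncation $\zeta_1$ is therefore the right one. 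Once completed with the inequality above, your argument is a rigorous proof under Assumptions~\ref{A1}--\ref{A2} and continuity, which the paper's shortcut is not.
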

\begin{proof}
From the line search~\eqref{eq:linesearch1}, we have $G(z_k)^T p_k \leq -\zeta\alpha_k\norm{p_k}^2\norm{G(z_k)}$, which gives
\begin{equation}\label{eq:hyperplane_ineq}
  G(z_k)^T(x_k - z_k) = -\alpha_k G(z_k)^T p_k \geq \zeta\norm{x_k - z_k}^2\norm{G(z_k)}.
\end{equation}
By monotonicity of~$G$ and Assumption~\ref{A1}, there exists $x^* \in \Gamma$ with $G(x^*) = 0$, and
\begin{equation}\label{eq:mono_ineq}
  G(z_k)^T(x_k - x^*) \geq G(z_k)^T(x_k - z_k) \geq \zeta\norm{x_k - z_k}^2\norm{G(z_k)}.
\end{equation}
The non-expansiveness of the projection then gives
\begin{align*}
  \norm{x_{k+1} - x^*}^2 &\leq \norm{x_k - x^*}^2 - 2\gamma\mu_k G(z_k)^T(x_k - x^*) + \gamma^2\mu_k^2\norm{G(z_k)}^2 \\
  &\leq \norm{x_k - x^*}^2 - \gamma(2-\gamma)\frac{(G(z_k)^T(x_k - z_k))^2}{\norm{G(z_k)}^2}.
\end{align*}
Combining with~\eqref{eq:hyperplane_ineq},
\begin{equation}\label{eq:descent_seq}
  \norm{x_{k+1} - x^*}^2 \leq \norm{x_k - x^*}^2 - \gamma(2-\gamma)\zeta^2\norm{x_k - z_k}^4.
\end{equation}
The sequence $\{\norm{x_k - x^*}\}$ is therefore non-increasing and bounded below, hence convergent. Summing~\eqref{eq:descent_seq} over $k$ yields
\[
  \gamma(2-\gamma)\zeta^2 \sum_{k=0}^{\infty}\norm{x_k - z_k}^4 \leq \norm{x_0 - x^*}^2 < \infty,
\]
so $\norm{x_k - z_k} = \alpha_k\norm{p_k} \to 0$.
\end{proof}

\begin{theorem}\label{thm:convergenceGMOP}
Suppose Assumptions~\ref{A1} and~\ref{A2} hold. Let $\{x_k\}$ be generated by Algorithm~\ref{alg:GMOPCGM}. Then
\begin{equation}\label{eq:globalconvGMOP}
  \liminf_{k \to \infty}\norm{G_k} = 0.
\end{equation}
\end{theorem}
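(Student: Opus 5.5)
We argue by contradiction. Suppose \eqref{eq:globalconvGMOP} fails. Then there is $\epsilon_0>0$ with $\norm{G_k}\geq\epsilon_0$ for all $k$.

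\textbf{Boundedness of the iterates.} The proof of Lemma~\ref{lem:limGMOP} shows that $\{\norm{x_k-x^*}\}$ is non-increasing. Hence $\{x_k\}$ is bounded. By continuity of $G$ (implicitly assumed, as noted after Assumption~\ref{A3}), there is $C>0$ with $\norm{G_k}\leq C$ for all $k$. The lower bound $\alpha_{\min}\norm{G_k}\leq\norm{p_k}$ needs only \eqref{eq:descentGMOP} and Cauchy--Schwarz, not Lipschitz continuity. It therefore gives $\norm{p_k}\geq\alpha_{\min}\epsilon_0$.

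\textbf{Uniform bound on $\norm{p_k}$ without Lipschitz continuity.} This is the step I expect to be the main obstacle. The aim is to replace the Lipschitz argument of Lemma~\ref{lem:trustregionGMOP} by boundedness of $G$ on the bounded set containing the iterates.
\begin{itemize}
\item Since the $x_k$ and $G_k$ are bounded, we have $\norm{y_{k-1}}\leq 2C$.
\item The quantities $\lambda_k$ and $\tstar=\lambda_k\,s_{k-1}^Tv_{k-1}/\norm{s_{k-1}}^2$ are controlled by $\alpha_{\max}$ and the ratio $\norm{v_{k-1}}/\norm{s_{k-1}}$.
\item The delicate point is a lower bound on the denominator $p_{k-1}^Tv_{k-1}$ in \eqref{eq:thetaG}. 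I would try to show
\[
p_{k-1}^Tv_{k-1}=p_{k-1}^Ty_{k-1}+\tau\alpha_{k-1}\norm{p_{k-1}}^2\gtrsim \tau\alpha_{k-1}\norm{p_{k-1}}^2,
\]
using monotonicity. If the $y$-term is not directly sign-controlled (because $x_k\neq z_{k-1}$), I would fall back on bounding $p_{k-1}^Ty_{k-1}$ via the line search inequality at $z_{k-1}$.
\end{itemize}
Together these should give $|\thetaG|\,\norm{p_{k-1}}\leq c\norm{G_k}$. Then $\norm{p_k}\leq\alpha_{\max}\norm{G_k}+2|\thetaG|\norm{p_{k-1}}$ yields $\norm{p_k}\leq \bar\kappa$ for some constant $\bar\kappa$.

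\textbf{Step sizes vanish.} Lemma~\ref{lem:limGMOP} gives $\alpha_k\norm{p_k}\to0$. Combined with $\norm{p_k}\geq\alpha_{\min}\epsilon_0$, this forces $\alpha_k\to0$. So for all large $k$ the trial step $\tilde\alpha_k=\alpha_k/\rho<\beta$ violates \eqref{eq:linesearch1}:
\[
G(x_k+\tilde\alpha_kp_k)^Tp_k>-\zeta\tilde\alpha_k\norm{p_k}^2\,\PI{[\zeta_1,\zeta_2]}(\norm{G(x_k+\tilde\alpha_kp_k)}).
\]

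\textbf{Passing to the limit.} Since $\{x_k\}$ and $\{p_k\}$ are bounded, we may choose a subsequence with $x_k\to\bar x$ and $p_k\to\bar p$. Along it, $\tilde\alpha_kp_k\to0$. The clipped factor lies in $[\zeta_1,\zeta_2]$, and $G$ is continuous, so letting $k\to\infty$ in the violated inequality gives $G(\bar x)^T\bar p\geq0$. On the other hand, \eqref{eq:descentGMOP} gives
\[
G_k^Tp_k\leq-\alpha_{\min}\norm{G_k}^2\leq-\alpha_{\min}\epsilon_0^2,
\]
and in the limit $G(\bar x)^T\bar p\leq-\alpha_{\min}\epsilon_0^2<0$. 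This contradiction proves \eqref{eq:globalconvGMOP}.
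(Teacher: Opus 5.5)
There is a genuine gap, and it sits exactly at the step you flagged: you never establish that $\{p_k\}$ is bounded. Your skeleton is the same as the paper's: contradiction, $\alpha_k\to0$ from Lemma~\ref{lem:limGMOP} together with $\norm{p_k}\ge\alpha_{\min}\epsilon_0$, a violated trial step, and a subsequential limit played against \eqref{eq:descentGMOP}. It is sound \emph{provided} $\{p_k\}$ is bounded, but your sketch does not deliver that under Assumptions~\ref{A1}--\ref{A2}.

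\textbf{The denominator.} With $y_{k-1}=G(x_k)-G(x_{k-1})$, monotonicity controls only $(x_k-x_{k-1})^Ty_{k-1}$. But $x_k-x_{k-1}$ is a projected step along $-G(z_{k-1})$, not along $p_{k-1}$. The line search at $z_{k-1}$ controls $G(z_{k-1})^Tp_{k-1}$, which does not occur in $p_{k-1}^Ty_{k-1}=G_k^Tp_{k-1}-G_{k-1}^Tp_{k-1}$. The term $G_k^Tp_{k-1}$ is uncontrolled, so your fallback gives nothing, and $p_{k-1}^Tv_{k-1}$ need not even be positive.

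\textbf{The ratio $\norm{y_{k-1}}/\norm{s_{k-1}}$.} This is the more serious problem. Even granting $p_{k-1}^Tv_{k-1}\ge\tau\alpha_{k-1}\norm{p_{k-1}}^2$, the estimate you outline only yields
\[
  |\thetaG|\,\norm{p_{k-1}}\le(1+\alpha_{\max})\,\frac{\norm{v_{k-1}}}{\tau\norm{s_{k-1}}}\,\norm{G_k},
  \qquad
  \frac{\norm{v_{k-1}}}{\norm{s_{k-1}}}\le\frac{\norm{y_{k-1}}}{\norm{s_{k-1}}}+\tau .
\]
The bound $\norm{y_{k-1}}\le 2C$ is useless here, because $\norm{s_{k-1}}=\alpha_{k-1}\norm{p_{k-1}}\to0$ by Lemma~\ref{lem:limGMOP}. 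Uniform continuity of $G$ on a compact set containing the iterates does give $\norm{y_{k-1}}\to0$, since $\norm{x_k-x_{k-1}}\le\gamma\norm{s_{k-1}}$. It gives no rate $\norm{y_{k-1}}=O(\norm{s_{k-1}})$, however. That linear rate is exactly what Assumption~\ref{A3} supplies in Lemma~\ref{lem:trustregionGMOP} via $\norm{y_{k-1}}\le L\gamma\norm{s_{k-1}}$, and nothing in Assumptions~\ref{A1}--\ref{A2} replaces it.

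\textbf{Normalizing does not help.} With $p_k/\norm{p_k}$ in place of $p_k$, \eqref{eq:descentGMOP} gives only $G_k^Tp_k/\norm{p_k}\le-\alpha_{\min}\epsilon_0^2/\norm{p_k}$. This tends to $0$ if $\norm{p_k}\to\infty$, so the two limits are $\ge0$ and $\le0$ and do not conflict.

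The paper's own proof has the same hole. It asserts that $\{p_k\}$ is bounded ``by Lemma~\ref{lem:limGMOP} and the descent inequality.'' However, \eqref{eq:descentGMOP} bounds $\norm{p_k}$ only from below, and Lemma~\ref{lem:limGMOP} only gives $\alpha_k\norm{p_k}\to0$. So you located the crux correctly, but it remains open in your proposal. Closing it requires one of two things:
\begin{itemize}
\item Assumption~\ref{A3}, via Lemma~\ref{lem:trustregionGMOP}. That lemma additionally relies on the claim $s_{k-1}^Ty_{k-1}\ge0$ from Section~\ref{sec:methods}, which is the very sign condition you rightly doubted for $y_{k-1}=G_k-G_{k-1}$.
\item A safeguard built into $\thetaG$ that enforces $|\thetaG|\,\norm{p_{k-1}}\le c\norm{G_k}$ by construction.
\end{itemize}
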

\begin{proof}
Suppose for contradiction that there exists $\varepsilon > 0$ with $\norm{G_k} > \varepsilon$ for all~$k$. By~\eqref{eq:descentGMOP}, $\norm{p_k} \geq \alpha_{\min}\varepsilon > 0$ for all~$k$. Together with Lemma~\ref{lem:limGMOP}, this implies $\alpha_k \to 0$.

For each~$k$, let $\bar{\alpha}_k = \alpha_k/\rho$, which violates~\eqref{eq:linesearch1}:
\[
  -G(x_k + \bar{\alpha}_k p_k)^T p_k < \bar{\alpha}_k\zeta\norm{p_k}^2\zeta_2.
\]
Since $\{x_k\}$ and $\{p_k\}$ are bounded (by Lemma~\ref{lem:limGMOP} and the descent inequality), we may extract subsequences converging to accumulation points $\bar{x}$ and $\bar{p}$. Taking $k \to \infty$ in the violated line search condition gives $-G(\bar{x})^T\bar{p} \leq 0$, while the sufficient descent condition gives $-G(\bar{x})^T\bar{p} \geq \alpha_{\min}\norm{G(\bar{x})}^2 > 0$, a contradiction.
\end{proof}

Under the additional Lipschitz continuity assumption, we can give an alternative proof.

\begin{theorem}\label{thm:convergenceGMOP_Lip}
Suppose Assumptions~\ref{A1}--\ref{A3} hold. Then~\eqref{eq:globalconvGMOP} holds.
\end{theorem}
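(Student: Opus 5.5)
We argue by contradiction, the standard way for Solodov--Svaiter type methods, but now using the quantitative lower bound on the step size from Lemma~\ref{lem:stepsizeGMOP} and the trust region property of Lemma~\ref{lem:trustregionGMOP}. This replaces the compactness and limiting argument used in Theorem~\ref{thm:convergenceGMOP}. Suppose that \eqref{eq:globalconvGMOP} fails. Then there is $\varepsilon>0$ with $\norm{G_k}\geq\varepsilon$ for all $k$.

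First, the upper bound of Lemma~\ref{lem:trustregionGMOP}, $\norm{p_k}\leq\kappa\norm{G_k}$, needs $\norm{G_k}$ bounded above. By Lemma~\ref{lem:limGMOP} (more precisely \eqref{eq:descent_seq} in its proof), $\norm{x_k-x^*}\leq\norm{x_0-x^*}$ for all $k$, so $\{x_k\}$ is bounded. Lipschitz continuity then gives $\norm{G_k}\leq\norm{G(x^*)}+L\norm{x_k-x^*}\leq L\norm{x_0-x^*}=:\omega$. Hence $\varepsilon\leq\norm{G_k}\leq\omega$, and therefore $\alpha_{\min}\varepsilon\leq\norm{p_k}\leq\kappa\omega$.

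Next, we insert the trust region bound into Lemma~\ref{lem:stepsizeGMOP}:
\[
  \alpha_k\norm{p_k}\;\geq\;\min\Bigl\{\beta\norm{p_k},\;\frac{\alpha_{\min}\rho}{L+\zeta\zeta_2}\,\frac{\norm{G_k}^2}{\norm{p_k}}\Bigr\}
  \;\geq\;\min\Bigl\{\beta\alpha_{\min}\varepsilon,\;\frac{\alpha_{\min}\rho\,\varepsilon^2}{(L+\zeta\zeta_2)\kappa\omega}\Bigr\}>0 .
\]
So $\alpha_k\norm{p_k}$ is bounded away from zero uniformly in $k$. This contradicts Lemma~\ref{lem:limGMOP}, which gives $\alpha_k\norm{p_k}\to0$ under Assumptions~\ref{A1}--\ref{A2}, and completes the proof.

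The main obstacle is justifying the hypotheses of the earlier lemmas, not the final chain of inequalities. Lemma~\ref{lem:trustregionGMOP} relies on $\norm{v_{k-1}}\leq(L\gamma+\tau)\norm{s_{k-1}}$ and on a lower bound $p_{k-1}^Tv_{k-1}\geq\tau\alpha_{k-1}\norm{p_{k-1}}^2$, which comes from monotonicity of $G$. We take that lemma as given. We must also ensure $\norm{G_k}$ is uniformly bounded above, which is why the boundedness of $\{x_k\}$ from the Fej\'er monotonicity in \eqref{eq:descent_seq} is used explicitly. Finally, we note that in the case $\alpha_k=\beta$ the lower bound $\beta\norm{p_k}\geq\beta\alpha_{\min}\varepsilon$ holds directly, so both branches of the minimum are covered.
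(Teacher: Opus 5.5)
Your argument is correct relative to the paper's lemmas. It uses the same mechanism as the paper: Lipschitz continuity at the rejected trial step $\alpha_k/\rho$ bounds $\norm{G_k}^2$ by a constant times $\alpha_k\norm{p_k}^2$, and this is played against $\alpha_k\norm{p_k}\to0$ from Lemma~\ref{lem:limGMOP}. The packaging differs.

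The paper argues directly, splitting on $\liminf_k\norm{p_k}$. In the case $\liminf_k\norm{p_k}>0$ one gets $\alpha_k\to0$, so $\alpha_k<\beta$ eventually and the rejected trial exists (this is left implicit). The paper then concludes $\norm{G_k}^2\le\frac{L+\zeta\zeta_2}{\rho\alpha_{\min}}\alpha_k\norm{p_k}^2\to0$. That step tacitly needs control of $\norm{p_k}$, since Lemma~\ref{lem:limGMOP} only gives $\alpha_k\norm{p_k}\to0$. You instead argue by contradiction, absorb the branch $\alpha_k=\beta$ into the $\min$ of Lemma~\ref{lem:stepsizeGMOP}, and make the control of $\norm{p_k}$ explicit through Lemma~\ref{lem:trustregionGMOP}. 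The paper's version is shorter; yours is more complete.

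Two simplifications are available. You can drop the Fej\'er bound $\omega$: the trust region inequality already gives $\norm{G_k}^2/\norm{p_k}\ge\norm{G_k}/\kappa\ge\varepsilon/\kappa$, so no uniform bound on $\norm{p_k}$ is needed. And since $\norm{G_k}\ge\varepsilon$ is used only index by index, running your argument along a subsequence yields the stronger conclusion $\lim_{k\to\infty}\norm{G_k}=0$.

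One correction to your closing aside: $p_{k-1}^Tv_{k-1}\ge\tau\alpha_{k-1}\norm{p_{k-1}}^2$ does not follow from monotonicity. Monotonicity pairs $y_{k-1}=G_k-G_{k-1}$ with $x_k-x_{k-1}$, which need not be parallel to $s_{k-1}=\alpha_{k-1}p_{k-1}$. This does not affect your proof, since you take Lemma~\ref{lem:trustregionGMOP} as given. But that lemma is the weak link on which both your argument and, implicitly, the paper's rest.
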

\begin{proof}
If $\liminf_{k\to\infty}\norm{p_k} = 0$, the result follows from~\eqref{eq:descentGMOP}. Otherwise, $\liminf_{k\to\infty}\norm{p_k} > 0$. For each~$k$, let $\tilde{\alpha}_k = \alpha_k/\rho$ violate~\eqref{eq:linesearch1}. Using the sufficient descent condition and Lipschitz continuity,
\[
  \alpha_{\min}\norm{G_k}^2 \leq (L + \zeta\zeta_2)\tilde{\alpha}_k\norm{p_k}^2,
\]
so $\norm{G_k}^2 \leq \frac{(L+\zeta\zeta_2)}{\rho\alpha_{\min}}\alpha_k\norm{p_k}^2 \to 0$ by Lemma~\ref{lem:limGMOP}.
\end{proof}

We now turn to the convergence of \GCGPM{} (Algorithm~\ref{alg:GCGPM}).

\begin{lemma}\label{lem:stepsizeGCGPM}
Suppose Assumption~\ref{A3} holds, $0 \leq \tau \leq 1$, and $\alpha_{\min} > \frac{1+\tau}{2}$. Then
\[
  \alpha_k \geq \min\!\left\{\eta,\; \frac{\rho[4\alpha_{\min}^2 - (1+\tau)^2]}{4\alpha_{\min}(L + \zeta\zeta_2)}\cdot\frac{\norm{G_k}^2}{\norm{p_k}^2}\right\}.
\]
\end{lemma}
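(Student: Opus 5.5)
The plan mirrors the proof of Lemma~\ref{lem:stepsizeGMOP}, with the sufficient descent constant of \GMOPCGM{} replaced by the one from Lemma~\ref{lem:descentGCGPM}. Set
\[
  c := \alpha_{\min}\Bigl(1 - \frac{(1+\tau)^2}{4\alpha_{\min}^2}\Bigr) = \frac{4\alpha_{\min}^2 - (1+\tau)^2}{4\alpha_{\min}}.
\]
The hypotheses $0\le\tau\le 1$ and $\alpha_{\min} > (1+\tau)/2$ are exactly what Lemma~\ref{lem:descentGCGPM} needs, so $G_k^T p_k \le -c\norm{G_k}^2$ holds, and $c>0$. The target bound is then $\alpha_k \ge \min\{\eta,\ \rho c\norm{G_k}^2/((L+\zeta\zeta_2)\norm{p_k}^2)\}$, which is the claimed expression once $c$ is written out.

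Split into two cases according to the backtracking rule in Algorithm~\ref{alg:GCGPM}.

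\textbf{Case $\alpha_k = \eta$.} The bound is immediate.

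\textbf{Case $\alpha_k < \eta$.} Here $\tilde\alpha_k = \alpha_k/\rho$ was tried and rejected, so it violates~\eqref{eq:linesearch2}:
\[
  -G(x_k+\tilde\alpha_k p_k)^T p_k < \zeta\tilde\alpha_k\norm{p_k}^2\,\PI{[\zeta_1,\zeta_2]}(\cdot) \le \zeta\zeta_2\tilde\alpha_k\norm{p_k}^2 .
\]
Next, write
\[
  c\norm{G_k}^2 \le -G_k^T p_k = \bigl(G(x_k+\tilde\alpha_k p_k) - G_k\bigr)^T p_k - G(x_k+\tilde\alpha_k p_k)^T p_k .
\]
By Cauchy--Schwarz and Assumption~\ref{A3}, the first term is at most $L\tilde\alpha_k\norm{p_k}^2$. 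The second term is bounded using the violated line search inequality above. Together these give
\[
  c\norm{G_k}^2 \le \frac{\alpha_k}{\rho}(L+\zeta\zeta_2)\norm{p_k}^2 .
\]
Rearranging yields the second entry of the minimum. Since $\norm{G_k}>0$ whenever the algorithm has not stopped, $p_k\neq 0$ by the descent inequality, so dividing by $\norm{p_k}^2$ is legitimate.

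There is no real obstacle. The only points to check are that the descent constant $c$ is positive under the stated parameter restrictions, and that the $k=0$ direction $p_0=-G_0$ also satisfies $G_0^Tp_0\le -c\norm{G_0}^2$. The latter holds because $c \le \alpha_{\min}(1 - (1+\tau)^2/(4\alpha_{\min}^2))$ is at most $1$ only if we verify $c\le 1$. Otherwise we invoke Lemma~\ref{lem:descentGCGPM}, which already asserts the inequality for all $k$ including $k=0$; I would rely on the lemma as stated.
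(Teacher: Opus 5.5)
Your argument is the paper's proof. The paper likewise just reruns Lemma~\ref{lem:stepsizeGMOP} with the constant $c$ of Lemma~\ref{lem:descentGCGPM}, and for $k\ge1$ everything you wrote is correct: the case split, the rejected trial step $\alpha_k/\rho$, the Lipschitz estimate, and $\PI{[\zeta_1,\zeta_2]}(\cdot)\le\zeta_2$.

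The weak point is the one you flagged at $k=0$, and your closing sentence does not resolve it. Since $p_0=-G_0$, you only have $G_0^Tp_0=-\norm{G_0}^2$. So the usable descent constant at $k=0$ is $\min\{1,c\}$, and the hypotheses do not force $c\le1$; that needs $\alpha_{\min}\le\tfrac12\bigl(1+\sqrt{1+(1+\tau)^2}\bigr)$. Appealing to Lemma~\ref{lem:descentGCGPM} does not help, because its ``the case $k=0$ is immediate'' has exactly the same hole.

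In fact the stated bound can fail at $k=0$. Take $G(x)=Lx$ and $\zeta_1=\zeta_2$. Then $\norm{p_0}=\norm{G_0}$, and the line search~\eqref{eq:linesearch2} at $k=0$ accepts precisely the trial steps with $\alpha\le1/(L+\zeta\zeta_2)$. Hence $\alpha_0\le 1/(L+\zeta\zeta_2)<\min\{\eta,\rho c/(L+\zeta\zeta_2)\}$ whenever $\eta>1/(L+\zeta\zeta_2)$ and $\rho c>1$. For example, $\tau=1$, $\rho=\tfrac12$, $\alpha_{\min}=3$ gives $c=\tfrac83$.

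There are three easy repairs, and none changes Theorem~\ref{thm:convergenceGCGPM}, which only needs the estimate for large $k$:
\begin{itemize}
\item run your argument with $\min\{1,c\}$ in place of $c$;
\item state the bound for $k\ge1$ only;
\item add $c\le1$ as a hypothesis, which holds for the values in Table~\ref{tab:params}, where $c\approx0.09$.
\end{itemize}
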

\begin{proof}
The proof follows the same steps as Lemma~\ref{lem:stepsizeGMOP}, with the descent condition~\eqref{eq:descentGCGPM} replacing~\eqref{eq:descentGMOP}.
\end{proof}

\begin{lemma}\label{lem:limGCGPM}
Suppose Assumptions~\ref{A1} and~\ref{A2} hold. Then $\lim_{k\to\infty}\alpha_k\norm{p_k} = 0$.
\end{lemma}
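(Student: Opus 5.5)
The plan is to repeat the argument of Lemma~\ref{lem:limGMOP} almost verbatim. That argument never used the specific form of the direction $p_k$: it relied only on the line search, the projection step, monotonicity of $G$, and the existence of a solution. Algorithm~\ref{alg:GCGPM} uses the identical line search~\eqref{eq:linesearch2} (with $\eta$ replacing $\beta$ as the initial trial step) and the identical update $x_{k+1} = \PG(x_k - \gamma\mu_k G(z_k))$. The direction formula~\eqref{eq:GCGPM} therefore enters only in guaranteeing that $\alpha_k$ is well defined.

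First I will check that the line search terminates. If not, $G(x_k+\rho^i\eta p_k)^Tp_k > -\zeta\rho^i\eta\norm{p_k}^2\zeta_2$ for all $i$. Letting $i\to\infty$ and using continuity of $G$ gives $G_k^Tp_k\ge 0$, which contradicts Lemma~\ref{lem:descentGCGPM} whenever $G_k\neq 0$; here I assume the parameter conditions of that lemma are in force, as in the algorithm's intended use.

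Next, from~\eqref{eq:linesearch2}, with $z_k = x_k+\alpha_kp_k$, I obtain
\[
G(z_k)^T(x_k-z_k) = -\alpha_k G(z_k)^Tp_k \ge \zeta\,\PI{[\zeta_1,\zeta_2]}(\norm{G(z_k)})\norm{x_k-z_k}^2 .
\]
Taking any $x^*\in S^*$ (Assumption~\ref{A1}), monotonicity gives $G(z_k)^T(z_k-x^*)\ge G(x^*)^T(z_k-x^*)=0$, hence $G(z_k)^T(x_k-x^*)\ge G(z_k)^T(x_k-z_k)$. Since $x^*\in\Gamma$, non-expansiveness of $\PG$ yields
\[
\norm{x_{k+1}-x^*}^2\le \norm{x_k-x^*}^2-2\gamma\mu_kG(z_k)^T(x_k-x^*)+\gamma^2\mu_k^2\norm{G(z_k)}^2 .
\]
Substituting $\mu_k$ gives $\norm{x_{k+1}-x^*}^2\le\norm{x_k-x^*}^2-\gamma(2-\gamma)\,(G(z_k)^T(x_k-z_k))^2/\norm{G(z_k)}^2$.

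The main care point is converting this decrease into a bound on $\norm{x_k-z_k}$, since the factor $\PI{[\zeta_1,\zeta_2]}(\norm{G(z_k)})$ need not equal $\norm{G(z_k)}$. I will use $\PI{[\zeta_1,\zeta_2]}(\cdot)\ge\zeta_1$ to get $(G(z_k)^T(x_k-z_k))^2\ge\zeta^2\zeta_1^2\norm{x_k-z_k}^4$. To bound the denominator I need $\norm{G(z_k)}$ bounded. The sequence $\{x_k\}$ is bounded because $\norm{x_k-x^*}$ is non-increasing. I also need $\{p_k\}$ bounded; this is the genuinely delicate step, and I would handle it with continuity of $G$ on the bounded set and boundedness of $z_k$, as the paper does for Theorem~\ref{thm:convergenceGMOP}. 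Alternatively, I can avoid it entirely by writing the ratio as $\zeta^2\PI{[\zeta_1,\zeta_2]}(\norm{G(z_k)})^2\norm{x_k-z_k}^4/\norm{G(z_k)}^2$. When $\norm{G(z_k)}\le\zeta_2$, the clipped value is at least $\min\{\zeta_1,\norm{G(z_k)}\}$, so this ratio is at least $\min\{\zeta_1^2/\zeta_2^2,1\}\norm{x_k-z_k}^4$. When $\norm{G(z_k)}>\zeta_2$, I still need an upper bound on $\norm{G(z_k)}$, for which I fall back on boundedness of $z_k$ and continuity.

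With a constant $c>0$ such that $\norm{x_{k+1}-x^*}^2\le\norm{x_k-x^*}^2-c\norm{x_k-z_k}^4$, summing over $k$ gives $\sum_k\norm{x_k-z_k}^4\le\norm{x_0-x^*}^2/c<\infty$. Hence $\alpha_k\norm{p_k}=\norm{x_k-z_k}\to0$.
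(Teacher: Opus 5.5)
Your overall route is the paper's: Fejér monotonicity from the projection step plus monotonicity, then summing a decrease of order $\norm{x_k-z_k}^4$. You are also right that the clipped factor is the delicate point. The paper's proof, via Lemma~\ref{lem:limGMOP}, simply writes $\norm{G(z_k)}$ in place of $\PI{[\zeta_1,\zeta_2]}(\norm{G(z_k)})$, and the line search justifies that only when $\norm{G(z_k)}\le\zeta_2$.

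However, your proposal does not close this point either. In the case $\norm{G(z_k)}>\zeta_2$ you need $\sup_k\norm{G(z_k)}<\infty$. You defer this to boundedness of $\{z_k\}$, and that in turn to boundedness of $\{p_k\}$. You give no argument bounding $p_k$ under Assumptions~\ref{A1}--\ref{A2}, and none is available in the paper:
\begin{itemize}
  \item Lemma~\ref{lem:trustregionGMOP} concerns the other method and uses Lipschitz continuity.
  \item The boundedness of $\{p_k\}$ asserted in the proof of Theorem~\ref{thm:convergenceGMOP} is unsupported, since the descent inequality bounds $\norm{p_k}$ only from below.
  \item That assertion is derived there from Lemma~\ref{lem:limGMOP}, the analogue of the very statement you are proving, so citing it here is circular.
\end{itemize}
As written, the constant $c>0$ in your final inequality is never produced.

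The missing idea is to bound $\norm{x_k-z_k}$ without touching $p_k$, using monotonicity between $x_k$ and $z_k$. Since $\langle G(z_k)-G_k,\,z_k-x_k\rangle\ge 0$ and $\PI{[\zeta_1,\zeta_2]}(\cdot)\ge\zeta_1$,
\[
  \zeta\zeta_1\norm{x_k-z_k}^2\le G(z_k)^T(x_k-z_k)\le G_k^T(x_k-z_k)\le\norm{G_k}\,\norm{x_k-z_k},
\]
so $\norm{x_k-z_k}\le\norm{G_k}/(\zeta\zeta_1)$. The argument then closes as follows:
\begin{itemize}
  \item Your unquantified decrease inequality, valid because $\gamma\in(0,2)$, already keeps $\{x_k\}$ in the ball of radius $\norm{x_0-x^*}$ about $x^*$.
  \item Continuity then bounds $\norm{G_k}$, hence $\{z_k\}$ is bounded and $M:=\sup_k\norm{G(z_k)}<\infty$.
  \item $M>0$, because the line search with $p_k\neq 0$ forces $G(z_k)\neq 0$.
  \item Consequently $(G(z_k)^T(x_k-z_k))^2/\norm{G(z_k)}^2\ge\zeta^2\zeta_1^2\norm{x_k-z_k}^4/M^2$, and your summation argument finishes the proof.
\end{itemize}
With this step inserted, your argument is complete and handles the clipped line search more rigorously than the paper does.
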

\begin{proof}
The proof follows the same structure as Lemma~\ref{lem:limGMOP}, using the line search~\eqref{eq:linesearch2} and the monotonicity of~$G$. The key inequality~\eqref{eq:descent_seq} holds identically, giving $\sum_{k=0}^\infty \norm{x_k - z_k}^4 < \infty$.
\end{proof}

\begin{theorem}\label{thm:convergenceGCGPM}
Suppose Assumptions~\ref{A1}--\ref{A3} hold, $0 \leq \tau \leq 1$, and $\alpha_{\min} > \frac{1+\tau}{2}$. Let $\{x_k\}$ be generated by Algorithm~\ref{alg:GCGPM}. Then
\[
  \liminf_{k\to\infty}\norm{G_k} = 0.
\]
\end{theorem}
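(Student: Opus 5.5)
We argue by contradiction, following the pattern of Theorem~\ref{thm:convergenceGMOP_Lip}. Suppose there is $\varepsilon>0$ with $\norm{G_k}\geq\varepsilon$ for all $k$, and write $c=\alpha_{\min}\bigl(1-\frac{(1+\tau)^2}{4\alpha_{\min}^2}\bigr)>0$. From Lemma~\ref{lem:descentGCGPM} and Cauchy--Schwarz we get $\norm{p_k}\geq c\norm{G_k}\geq c\varepsilon$. Lemma~\ref{lem:limGCGPM} gives $\alpha_k\norm{p_k}\to0$. The goal is to show $\norm{p_k}$ is bounded above. Lemma~\ref{lem:stepsizeGCGPM} then gives $\alpha_k\norm{p_k}\geq\min\{\eta\norm{p_k},\,C\norm{G_k}^2/\norm{p_k}\}$, which is bounded below by a positive constant, a contradiction.

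\textbf{Boundedness of $\{x_k\}$ and $\{G_k\}$.} Inequality~\eqref{eq:descent_seq} holds for Algorithm~\ref{alg:GCGPM}, so $\norm{x_k-x^*}\leq\norm{x_0-x^*}$ and $\{x_k\}$ is bounded. By Assumption~\ref{A3}, $\norm{G_k}\leq\omega$ for some $\omega$. Also $\norm{z_k-x_k}=\alpha_k\norm{p_k}\to0$, so $\{z_k\}$ is bounded as well. Moreover, $\norm{x_{k}-x_{k-1}}\leq\gamma|\mu_{k-1}|\norm{G(z_{k-1})}\leq\gamma\norm{s_{k-1}}$, hence $\norm{y_{k-1}}\leq L\gamma\alpha_{k-1}\norm{p_{k-1}}\leq L\gamma\eta\norm{p_{k-1}}$.

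\textbf{Bounding $p_k$.} We need two estimates.
\begin{enumerate}[label=(\roman*), nosep]
\item \emph{A lower bound on $w_{k-1}^Tp_{k-1}$.} Monotonicity gives $y_{k-1}^Tp_{k-1}\geq0$, since $x_k-x_{k-1}$ is not along $p_{k-1}$. The safer route is the choice of $r_k$: $w_{k-1}^Tp_{k-1}=y_{k-1}^Tp_{k-1}+r_k\norm{p_{k-1}}^2\geq\norm{p_{k-1}}^2-|G_k^Tp_{k-1}|$ when the max is active. A cleaner handle is that $r_k\geq1$ combined with the $\max$ yields $w_{k-1}^Tp_{k-1}\geq y_{k-1}^Tp_{k-1}+\norm{p_{k-1}}^2-\min\{0,\,y_{k-1}^Tp_{k-1}+G_k^Tp_{k-1}\}$. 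I would aim to show $w_{k-1}^Tp_{k-1}\geq\norm{p_{k-1}}^2$ up to the sign of $y_{k-1}^Tp_{k-1}$, using $y_{k-1}^Tp_{k-1}=G_k^Tp_{k-1}-G_{k-1}^Tp_{k-1}\geq G_k^Tp_{k-1}+c\norm{G_{k-1}}^2$. In the case $-G_k^Tp_{k-1}/y_{k-1}^Tp_{k-1}>0$ this gives $w_{k-1}^Tp_{k-1}=y_{k-1}^Tp_{k-1}-G_k^Tp_{k-1}+\norm{p_{k-1}}^2\geq\norm{p_{k-1}}^2$. The other case is direct.
\item \emph{An upper bound on $\norm{w_{k-1}}$.} We have $\norm{w_{k-1}}\leq\norm{y_{k-1}}+r_k\norm{p_{k-1}}$, with $r_k\leq1+\omega\norm{p_{k-1}}/(c\varepsilon^2)$ from the same identity. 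This is the main obstacle: $r_k$ may grow with $\norm{p_{k-1}}$.
\end{enumerate}
To handle (ii), I would avoid bounding $r_k$ alone and instead bound the ratio $\norm{w_{k-1}}/(w_{k-1}^Tp_{k-1})$ directly. Both numerator and denominator scale with $r_k\norm{p_{k-1}}$, so this ratio is at most roughly $(L\gamma\eta+r_k)/(r_k\norm{p_{k-1}})\leq (L\gamma\eta+1)/\norm{p_{k-1}}$. Then every coefficient in~\eqref{eq:GCGPM} multiplying $p_{k-1}$ or $w_{k-1}$ is controlled:
\begin{itemize}
\item $|\thetaC|\,\norm{p_{k-1}}\leq\omega(L\gamma\eta+1)+\alpha_{\max}\omega(L\gamma\eta+1)^2$;
\item $|a_k|\,\norm{w_{k-1}}\leq\omega(L\gamma\eta+1)$.
\end{itemize}
This yields $\norm{p_k}\leq\kappa\omega$ with an explicit $\kappa$, a trust-region bound analogous to Lemma~\ref{lem:trustregionGMOP}.

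\textbf{Conclusion.} With $\norm{p_k}\leq\kappa\omega$, Lemma~\ref{lem:stepsizeGCGPM} gives $\alpha_k\norm{p_k}\geq\min\{\eta c\varepsilon,\,C\varepsilon^2/(\kappa\omega)\}>0$, where $C$ is the constant from Lemma~\ref{lem:stepsizeGCGPM}. This contradicts $\alpha_k\norm{p_k}\to0$.
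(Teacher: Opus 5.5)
Your architecture (contradiction, $\alpha_k\norm{p_k}\to0$, the step-size lemma, and an upper bound on $\norm{p_k}$) is sound. It is more careful than the paper's proof. The paper goes straight from the failed trial step $\alpha_k/\rho$, \eqref{eq:descentGCGPM} and Lipschitz continuity to $\norm{G_k}^2\le C\alpha_k\norm{p_k}^2$, then cites Lemma~\ref{lem:limGCGPM}. That last step tacitly needs $\sup_k\norm{p_k}<\infty$, which the paper never proves for \GCGPM. So you are aiming at exactly the missing ingredient, but your derivation of it fails at step~(i).

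\emph{Active case.} You wrote $w_{k-1}^Tp_{k-1}=y_{k-1}^Tp_{k-1}-G_k^Tp_{k-1}+\norm{p_{k-1}}^2$. The definition of $r_k$ actually gives the extra term $-\frac{G_k^Tp_{k-1}}{y_{k-1}^Tp_{k-1}}\norm{p_{k-1}}^2$. Your conclusion survives there only because $y_{k-1}^Tp_{k-1}-G_k^Tp_{k-1}=-G_{k-1}^Tp_{k-1}>0$ forces $y_{k-1}^Tp_{k-1}>0$ in that branch.

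\emph{The case you call direct.} This is the real problem. Whenever $y_{k-1}^Tp_{k-1}<0$, the same identity gives $G_k^Tp_{k-1}<y_{k-1}^Tp_{k-1}<0$, so the max vanishes and $r_k=1$. Then $w_{k-1}^Tp_{k-1}=y_{k-1}^Tp_{k-1}+\norm{p_{k-1}}^2$ can be arbitrarily small or even nonpositive; as you observed yourself, monotonicity gives no sign for $y_{k-1}^Tp_{k-1}$. Your ratio estimate $\norm{w_{k-1}}/(w_{k-1}^Tp_{k-1})\le(L\gamma\eta+1)/\norm{p_{k-1}}$ silently uses $w_{k-1}^Tp_{k-1}\ge r_k\norm{p_{k-1}}^2$, i.e.\ $y_{k-1}^Tp_{k-1}\ge0$. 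So the bounds on $\thetaC$, $a_k$ and $\norm{p_k}$ are not established as written.

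The gap closes if you use the contradiction hypothesis once more, rather than the structure of $r_k$.
\begin{enumerate}[label=(\alph*), nosep]
\item Since $\alpha_k\norm{p_k}\to0$ and $\norm{p_k}\ge c\varepsilon$, you have $\alpha_k\to0$.
\item Your own estimate $\norm{y_{k-1}}\le L\gamma\alpha_{k-1}\norm{p_{k-1}}$ (valid once $x_{k-1}\in\Gamma$, i.e.\ $k\ge2$) gives $|y_{k-1}^Tp_{k-1}|\le L\gamma\alpha_{k-1}\norm{p_{k-1}}^2$.
\item For all $k$ with $L\gamma\alpha_{k-1}\le\tfrac12$, using only $r_k\ge1$ and whichever branch of the max is active, you get $w_{k-1}^Tp_{k-1}\ge(r_k-\tfrac12)\norm{p_{k-1}}^2\ge\tfrac{r_k}{2}\norm{p_{k-1}}^2$ and $\norm{w_{k-1}}\le(r_k+\tfrac12)\norm{p_{k-1}}\le\tfrac{3r_k}{2}\norm{p_{k-1}}$.
\item Hence $\norm{w_{k-1}}/(w_{k-1}^Tp_{k-1})\le 3/\norm{p_{k-1}}$.
\end{enumerate}
Your coefficient estimates then go through with $L\gamma\eta+1$ replaced by $3$. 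This gives $\norm{p_k}\le(10\alpha_{\max}+3+3\tau)\omega$ for all large $k$, which is all the final contradiction needs. The crude bound $\alpha_{k-1}\le\eta$ would not suffice here, since nothing makes $L\gamma\eta$ small.
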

\begin{proof}
If $\liminf_{k\to\infty}\norm{p_k} = 0$, the result follows from~\eqref{eq:descentGCGPM}. Otherwise, for each~$k$, let $\tilde{\alpha}_k = \alpha_k/\rho$ violate~\eqref{eq:linesearch2}. Using~\eqref{eq:descentGCGPM} and Lipschitz continuity,
\[
  \alpha_{\min}\!\left[1 - \frac{(1+\tau)^2}{4\alpha_{\min}^2}\right]\norm{G_k}^2 \leq (L + \zeta\zeta_2)\tilde{\alpha}_k\norm{p_k}^2,
\]
so $\norm{G_k}^2 \leq \frac{4\alpha_{\min}(L+\zeta\zeta_2)}{\rho[4\alpha_{\min}^2 - (1+\tau)^2]}\alpha_k\norm{p_k}^2 \to 0$ by Lemma~\ref{lem:limGCGPM}.
\end{proof}

\section{Numerical experiments}\label{sec:experiments}

All algorithms were implemented in Julia~1.12 and executed on a desktop PC with an Intel Core~i9-9900K CPU (3.60\,GHz, 8 cores) and 32\,GB RAM running Windows~11 Pro. We compare \GMOPCGM{} and \GCGPM{} against three existing methods: \MOPCGM~\cite{sabi2023modified}, \CGPM~\cite{zheng2020conjugate}, and \STTDFPM~\cite{ibrahim2023two}. All competitor methods use their originally published line searches and parameters.

The algorithms are terminated when any of the following conditions is met:
\begin{enumerate}[label=(\roman*), nosep]
  \item $\norm{G(x_k)} < \varepsilon$ with $\varepsilon = 10^{-11}$,
  \item $\norm{p_k} < 0.1\varepsilon$,
  \item $k > 2000$,
  \item the line search fails to find a valid step size,
  \item a NaN or infinity is detected in $\norm{G(x_k)}$ or~$p_k$.
\end{enumerate}
Experiments are conducted at six dimensions, $n \in \{10^3,\, 5{\times}10^3,\, 10^4,\, 5{\times}10^4,\, 10^5,\, 1.2{\times}10^5\}$, with 10 initial points for each dimension. We record the number of iterations (IT), function evaluations (FE), and CPU time in seconds (CPU).

The parameters for \GMOPCGM{} and \GCGPM{} are reported in Table~\ref{tab:params}.

\begin{table}[htbp]
\centering
\caption{Parameter values for the proposed methods.}\label{tab:params}
\begin{tabular}{lll}
\toprule
Parameter & \GMOPCGM & \GCGPM \\
\midrule
$\tau$ & 1.0 & 0.001 \\
$\rho$ & 0.8 & 0.5 \\
$\beta$ or $\eta$ & 0.5 & 0.6 \\
$\zeta$ & 0.0001 & 0.1 \\
$\alpha_{\min}$ & 0.1 & 0.55 \\
$\alpha_{\max}$ & 2.0 & 4.9 \\
$\gamma$ & 1.1 & 1.8 \\
$\zeta_1, \zeta_2$ & 1.0, 1.0 & 1.0, 1.0 \\
\bottomrule
\end{tabular}
\end{table}

Two implementation-level enhancements were applied to the proposed methods. First, the projection relaxation parameter~$\gamma$ was adapted during the iteration: when $\norm{G_{k+1}} < \norm{G_k}$, $\gamma$ was increased by a factor of~$1.1$ (capped at~$1.8$ for \GMOPCGM{} and~$1.7$ for \GCGPM), and otherwise held near its current value, always remaining in~$(0,2)$. Since the convergence proofs require only $\gamma_k \in (0,2)$ at each iteration, this adaptive strategy preserves all theoretical guarantees while improving practical performance. Second, the spectral parameter~$\lambda_k$ was updated only when $\norm{G_{k+1}} \geq c\,\norm{G_k}$ (with $c = 0.75$ for \GMOPCGM{} and $c = 0.6$ for \GCGPM), retaining the current value during rapid-convergence phases. Since $\lambda_k$ always stays in $[\alpha_{\min},\alpha_{\max}]$, the sufficient descent conditions~\eqref{eq:descentGMOP} and~\eqref{eq:descentGCGPM} remain valid.

The 18 test problems used in the experiments are listed in Table~\ref{tab:testproblems}. All problems are solved over the constraint set $\Gamma = \R^n_+ = \{x \in \R^n : x_i \geq 0\}$, except Problem~16 which uses $\Gamma = [1,\infty)^n$ since $G$ involves $\log x_i$. Ten initial points are used: eight constant vectors $x_0 = c\,e$ with $c \in \{0.4,\, 0.5,\, 0.6,\, 0.8,\, 1.0,\, 1.1,\, 2.0,\, 5.0\}$ (where $e = (1,\ldots,1)^T$), and two structured vectors $x_0 = (1,\, 1/2,\, \ldots,\, 1/n)^T$ and $x_0 = (1/n,\, 2/n,\, \ldots,\, 1)^T$.

\begin{table}[htbp]
\centering
\caption{Test problems ($i = 1,\ldots,n$ unless stated otherwise).}\label{tab:testproblems}
\small
\begin{tabular}{@{}cll@{}}
\toprule
No.\ & Source & $G_i(x)$ \\
\midrule
1 & \cite{sabi2020two}, 4.1 & $2x_i - \sin x_i$ \\
2 & \cite{la2006spectral}, 10 & $\log(x_i+1) - x_i/n$ \\
3 & \cite{zheng2020conjugate}, 4.1 & $e^{x_i} - 1$ \\
4 & \cite{sabi2023modified}, 4.5 & $4x_i + (x_{i+1}-2x_i) - x_{i-1}^2/3$ \\
5 & \cite{zheng2020conjugate}, 4.4 & $x_i - \exp(\cos(\cdots))$ \\
6 & \cite{zheng2020conjugate}, 4.4 & $-x_{i-1} + 2x_i + \sin x_i - 1$ \\
7 & \cite{song2024efficient}, 14 & $x_i(x_{i-1}^2 + 2x_i^2 + x_{i+1}^2) - 1$ \\
8 & \cite{song2024efficient}, 2 & $(x_i - 1)^2 - 1.01$ \\
9 & \cite{song2024efficient}, 4 & $(i/n)e^{x_i} - 1$ \\
10 & \cite{ibrahim2024two}, 11 & $x_i - \sin|x_i - 1|$ \\
11 & \cite{waziri2022two}, 4.5 & $2x_i - \sin|x_i - 1|$ \\
12 & \cite{song2024efficient}, 6 & $x_i - 2\sin|x_i - 1|$ \\
13 & \cite{song2024efficient}, 11 & $(e^{x_i})^2 + 3\sin x_i\cos x_i - 1$ \\
14 & \cite{waziri2020descent}, 5 & $x_{i-1} + 2.5x_i + x_{i+1} - 1$ \\
15 & \cite{zhou2007limited}, 1 & $2x_i - \sin|x_i|$ \\
16 & \cite{la2006spectral}, 32 & Minimal function \\
17 & \cite{li2021scaled}, 4.11 & $2\!\cdot\!10^{-5}(x_i-1) + 4x_i\sum x_j^2 - x_i$ \\
18 & \cite{sabi2023modified}, 4.6 & $x_i\cos(x_i - 1/n)(\sin x_i - 1 - (1-x_i)^2 - \frac{1}{n}\sum x_j)$ \\
\bottomrule
\end{tabular}
\end{table}

The total benchmark comprises $18 \times 10 \times 5 \times 6 = 5{,}400$ runs. For comparison, we use the performance profiles of Dolan and Mor\'{e}~\cite{dolan2002benchmarking}. Figures~\ref{fig:iterations}--\ref{fig:time} display the profiles for iterations, function evaluations, and CPU time, respectively.

\begin{figure}[htbp]
\centering
\includegraphics[width=0.78\textwidth]{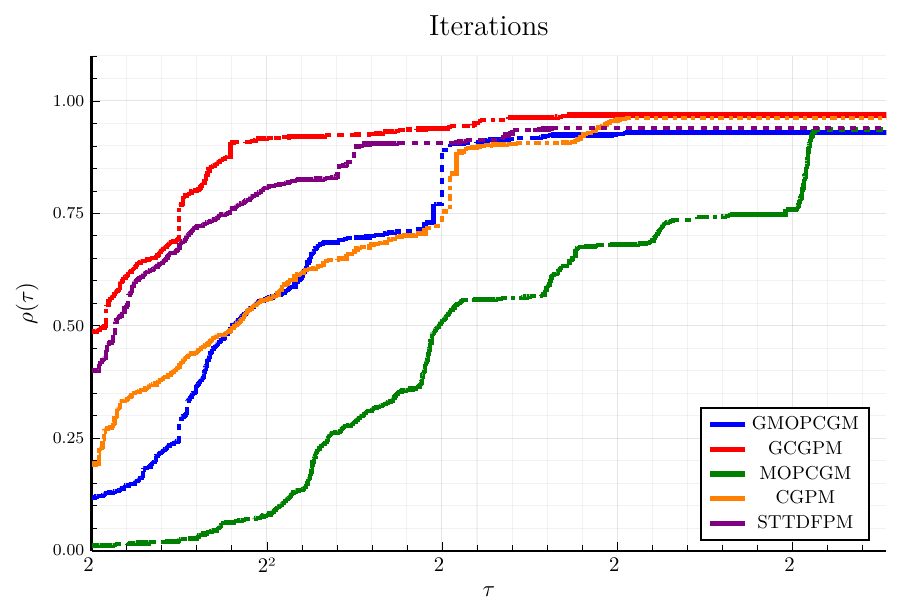}
\caption{Performance profile by number of iterations across all 5{,}400 benchmark instances. Higher curves indicate a greater probability of being within a factor~$\tau$ of the best solver.}\label{fig:iterations}
\end{figure}

\begin{figure}[htbp]
\centering
\includegraphics[width=0.78\textwidth]{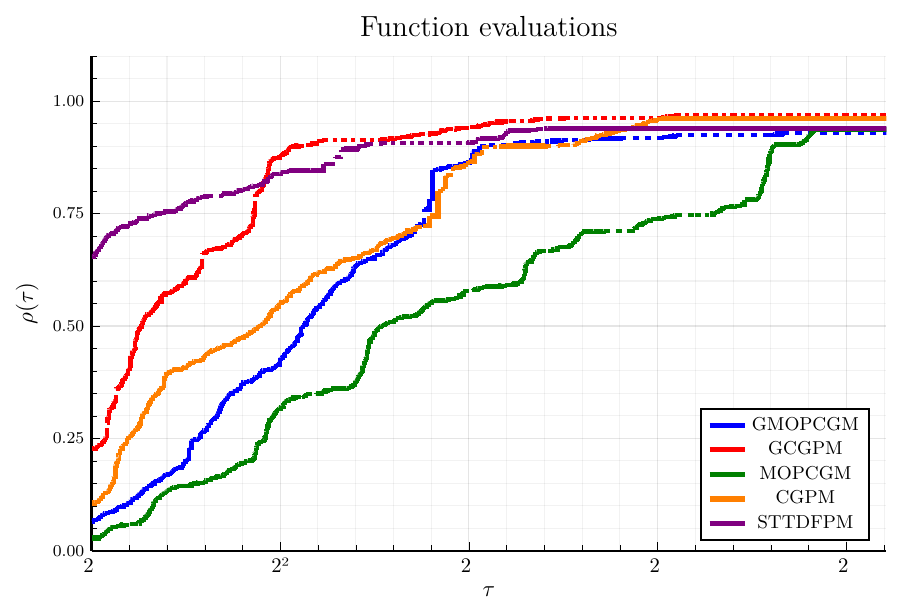}
\caption{Performance profile by number of function evaluations.}\label{fig:FE}
\end{figure}

\begin{figure}[htbp]
\centering
\includegraphics[width=0.78\textwidth]{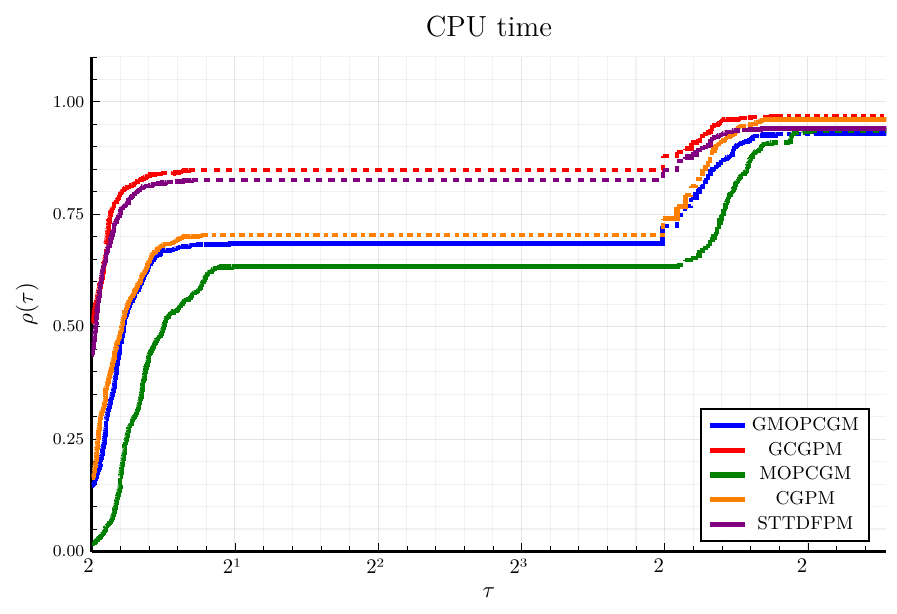}
\caption{Performance profile by CPU time (seconds).}\label{fig:time}
\end{figure}

Table~\ref{tab:aggregate} summarizes the aggregate performance. \GCGPM{} achieves the highest convergence rate (96.9\%) and the lowest median iteration count (10) and function evaluations (52) among all five methods. \GMOPCGM{} converges in 93.0\% of instances, comparable to \MOPCGM{} (93.6\%) and \STTDFPM{} (94.0\%), but with substantially fewer median iterations (28 vs.\ 160 for \MOPCGM). Both proposed methods improve markedly over their respective predecessors.

\begin{table}[htbp]
\centering
\caption{Aggregate performance across all 18 problems, 6 dimensions, and 10 initial points (5{,}400 runs per method). Median values are reported over converged instances.}\label{tab:aggregate}
\begin{tabular}{lrrrrrr}
\toprule
Method & Total & Conv & Rate (\%) & med IT & med FE & med CPU (s) \\
\midrule
\GMOPCGM & 1080 & 1004 & 93.0 & 28 & 120 & 0.029 \\
\GCGPM & 1080 & 1046 & 96.9 & 10 & 52 & 0.009 \\
\MOPCGM & 1080 & 1011 & 93.6 & 160 & 320 & 0.114 \\
\CGPM & 1080 & 1038 & 96.1 & 19 & 90 & 0.027 \\
\STTDFPM & 1080 & 1015 & 94.0 & 13 & 39 & 0.010 \\
\bottomrule
\end{tabular}
\end{table}

Table~\ref{tab:wins} presents pairwise win/tie/loss counts based on the number of iterations over all instances where both methods converged. The primary comparison for each proposed method is against its predecessor. \GMOPCGM{} wins 967 out of 982 comparisons against \MOPCGM{} (with 12 ties and only 3 losses), and \GCGPM{} wins 738 out of 1018 against \CGPM{} (with 35 ties and 245 losses). Both generalized methods dominate \MOPCGM{} overwhelmingly. Between the two proposed methods, \GCGPM{} wins 781 out of 989 comparisons against \GMOPCGM. Against \STTDFPM, which belongs to a different algorithmic family, \GCGPM{} wins 485 comparisons while losing 396, reflecting a moderate advantage in iteration count. \STTDFPM{} achieves the fewest function evaluations per iteration due to its faster-contracting line search, while \GCGPM{} requires fewer iterations overall.

\begin{table}[htbp]
\centering
\caption{Pairwise comparison by iterations (wins/ties/losses). Each entry shows the count for the row method against the column method over instances where both converged.}\label{tab:wins}
\small
\begin{tabular}{lccccc}
\toprule
 & \GMOPCGM & \GCGPM & \MOPCGM & \CGPM & \STTDFPM \\
\midrule
\GMOPCGM & --- & 163/45/781 & 967/12/3 & 536/43/419 & 260/40/678 \\
\GCGPM & 781/45/163 & --- & 967/6/14 & 738/35/245 & 485/124/396 \\
\MOPCGM & 3/12/967 & 14/6/967 & --- & 23/6/977 & 4/12/958 \\
\CGPM & 419/43/536 & 245/35/738 & 977/6/23 & --- & 315/81/607 \\
\STTDFPM & 678/40/260 & 396/124/485 & 958/12/4 & 607/81/315 & --- \\
\bottomrule
\end{tabular}
\end{table}

Figure~\ref{fig:scaling} shows how the median CPU time scales with the problem dimension. All methods exhibit near-linear growth, with \GCGPM{} and \STTDFPM{} consistently the fastest across all six dimensions. Figure~\ref{fig:convergence} displays a representative convergence trajectory on Problem~5 at $n = 50{,}000$, illustrating the residual reduction per iteration.

\begin{figure}[htbp]
\centering
\includegraphics[width=0.78\textwidth]{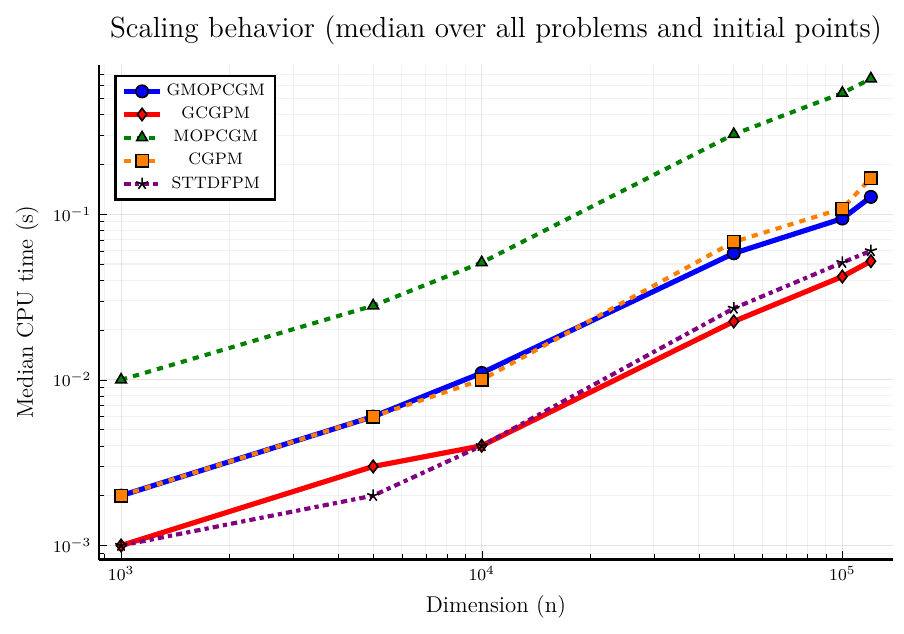}
\caption{Median CPU time versus problem dimension~$n$, aggregated over all 18 problems and 10 initial points. Both axes use a logarithmic scale.}\label{fig:scaling}
\end{figure}

\begin{figure}[htbp]
\centering
\includegraphics[width=0.78\textwidth]{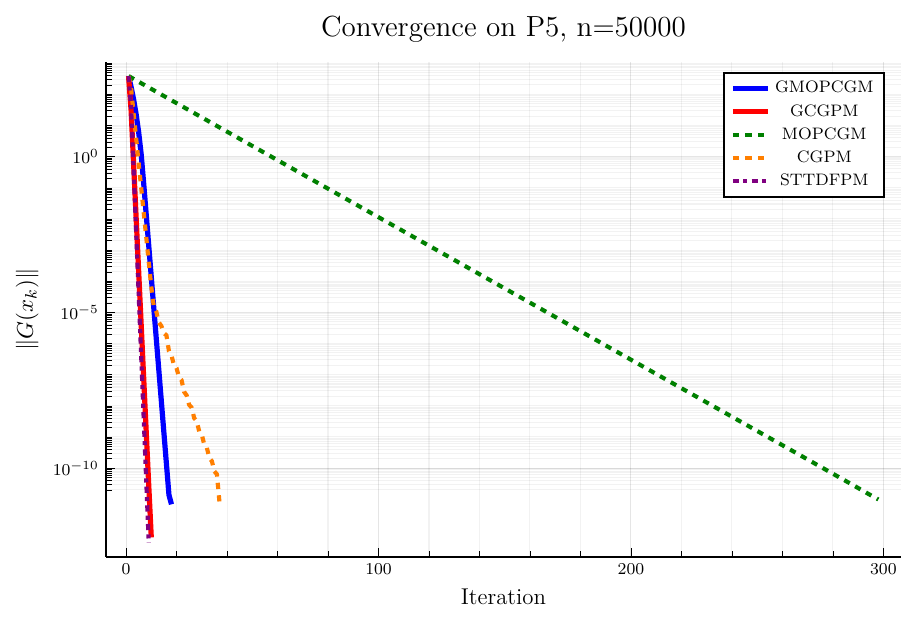}
\caption{Residual $\norm{G(x_k)}$ versus iteration on Problem~5 ($n = 50{,}000$, $x_0 = e$). \GCGPM{} converges in 9 iterations, comparable to \STTDFPM{} (8), while \MOPCGM{} requires nearly 300.}\label{fig:convergence}
\end{figure}

Detailed per-problem results for each dimension and initial point are provided in the supplementary material.

\section{Application to compressed sensing}\label{sec:application}

Signal restoration---recovering an original signal from degraded or incomplete observations---is a fundamental problem in signal processing~\cite{combettes2005signal, stark2013image}. In compressed sensing, the goal is to recover a sparse signal~$x \in \R^n$ from measurements $b = Ax + \nu$, where $A \in \R^{m \times n}$ with $m \ll n$ is a sensing matrix and $\nu$ is additive noise~\cite{figueiredo2007gradient}. This is accomplished by solving the $\ell_1$-regularized least-squares problem
\begin{equation}\label{eq:l1}
  \min_{x \in \R^n}\; \tfrac{1}{2}\norm{b - Ax}^2 + \tau\norm{x}_1,
\end{equation}
where $\tau > 0$ is a regularization parameter promoting sparsity. Following~\cite{xiao2011non, gao2018efficient, sabi2020two}, problem~\eqref{eq:l1} can be reformulated as
\begin{equation}\label{eq:complementarity}
  G(z) = \min\{z,\; Qz + c\} = 0, \quad z \geq 0,
\end{equation}
where $z = (\max\{x,0\};\, \max\{-x,0\})$, $Q = \bigl(\begin{smallmatrix} A^TA & -A^TA \\ -A^TA & A^TA \end{smallmatrix}\bigr)$, and $c = \tau\mathbf{1}_{2n} + (-A^Tb;\, A^Tb)$. The mapping~$G$ in~\eqref{eq:complementarity} is monotone and continuous~\cite{iusem1997newton}, so the problem fits the framework of~\eqref{eq:problem}.

We generate a sparse signal $x_{\mathrm{orig}} \in \R^n$ with $n = 2^{12}$ having $k$ randomly placed nonzero entries drawn from $\mathcal{N}(0, 10^{-3})$. The sensing matrix $A \in \R^{m \times n}$ is constructed by drawing i.i.d.\ $\mathcal{N}(0, 10^{-3})$ entries and orthogonalizing the rows via a QR factorization. The observation is $b = Ax_{\mathrm{orig}} + \nu$, where $\nu \sim \mathcal{N}(0, \sigma^2 I)$. The regularization parameter is set adaptively as $\tau = 0.01\norm{A^Tb}_\infty$, and each algorithm is initialized at $z_0 = (\max\{A^Tb, 0\};\, \max\{-A^Tb, 0\})$ and terminated when $\norm{G(z_k)} < 10^{-5}$. All methods use the same parameters as in the benchmark experiments.

To study how the methods perform under varying problem difficulty, we sweep over three parameters:
\begin{enumerate}[label=(\roman*), nosep]
  \item sparsity ratio $k/n \in \{0.05,\, 0.10,\, 0.20,\, 0.30\}$,
  \item measurement ratio $m/n \in \{0.25,\, 0.50,\, 0.75\}$,
  \item noise level $\sigma \in \{0,\, 10^{-3},\, 10^{-2},\, 10^{-1}\}$.
\end{enumerate}
For each of the $4 \times 3 \times 4 = 48$ configurations, we run 5 independent trials and report median values. The recovery quality is measured by the mean square error $\mathrm{MSE} = \norm{x_{\mathrm{orig}} - x_{\mathrm{rec}}}/n$.

Table~\ref{tab:cs} summarizes the median performance across all 240 instances per method. Both \GMOPCGM{} and \GCGPM{} converge on all 240 instances, as do \CGPM{} and \STTDFPM. \MOPCGM{} fails on 20 instances (all at the highest sparsity ratio $k/n = 0.30$). In terms of efficiency, \STTDFPM{} requires the fewest iterations (183) and function evaluations (410), followed closely by \GCGPM{} (197 iterations) and \GMOPCGM{} (235 iterations). \MOPCGM{} is the slowest, requiring over 1{,}300 median iterations and 16 seconds of CPU time---roughly 5--7 times more than the other methods. All methods achieve comparable recovery quality, with median MSE on the order of $10^{-6}$.

\begin{table}[htbp]
\centering
\sisetup{exponent-mode=scientific, round-mode=places, round-precision=2}
\caption{Compressed sensing: median performance over all 48 configurations and 5 trials (240 instances per method).}\label{tab:cs}
\begin{tabular}{lrrrrrr}
\toprule
Method & Total & Conv (\%) & med IT & med FE & med CPU (s) & med MSE \\
\midrule
\GMOPCGM & 240 & 100.0 & 235 & 705 & 3.02 & \num{1.23e-06} \\
\GCGPM & 240 & 100.0 & 197 & 788 & 2.92 & \num{1.22e-06} \\
\MOPCGM & 240 & 91.7 & 1362 & 2723 & 16.33 & \num{9.56e-07} \\
\CGPM & 240 & 100.0 & 712 & 1570 & 9.02 & \num{1.23e-06} \\
\STTDFPM & 240 & 100.0 & 183 & 410 & 2.37 & \num{1.23e-06} \\
\bottomrule
\end{tabular}
\end{table}

Figure~\ref{fig:cs_signals} displays a representative set of reconstructed signals from a single instance, confirming that all converging methods achieve visually indistinguishable recoveries. Figure~\ref{fig:cs_residual} shows the residual convergence for the same instance, illustrating the convergence speed differences.

\begin{figure}[htbp]
\centering
\includegraphics[width=0.78\textwidth]{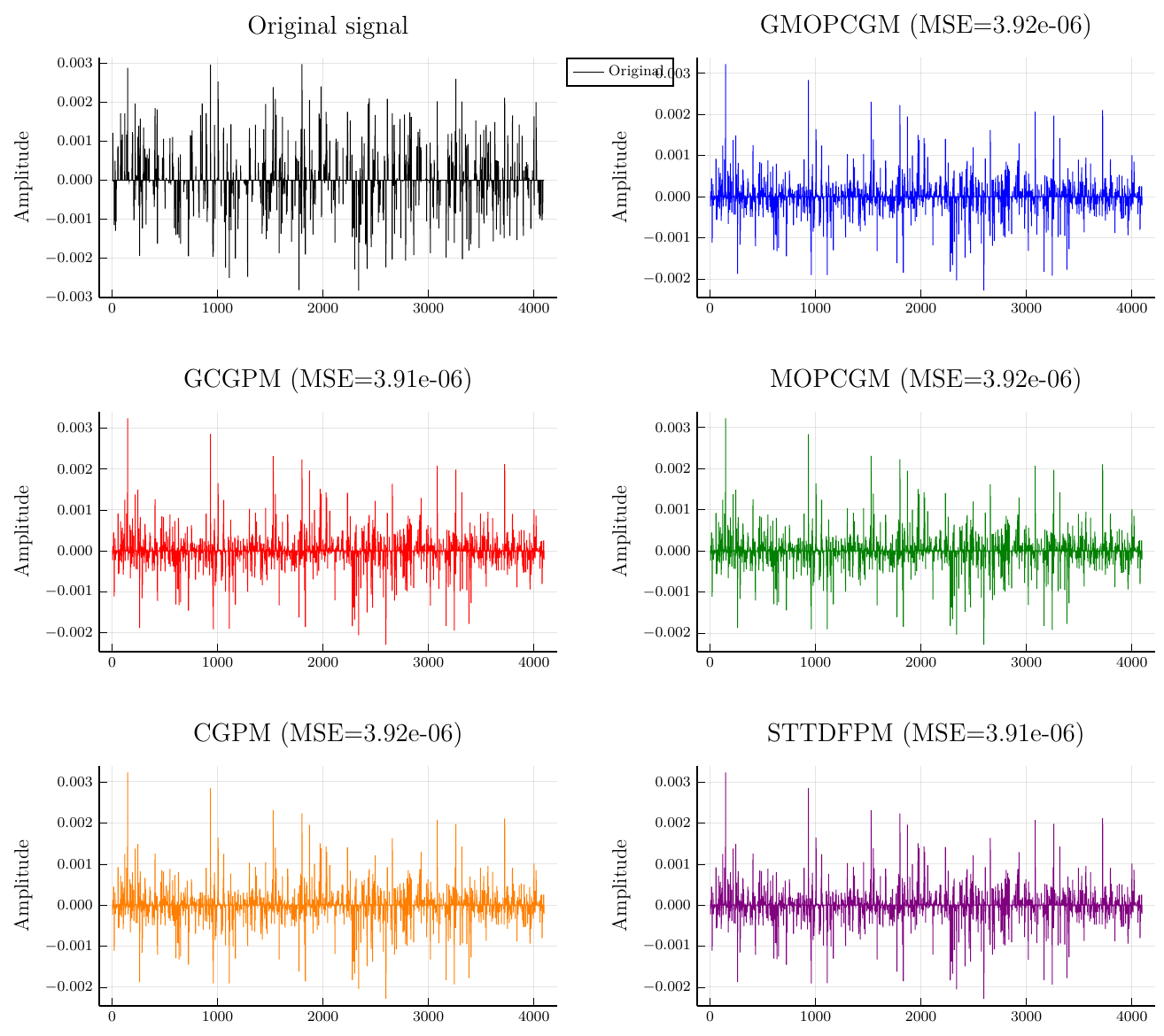}
\caption{Original and reconstructed signals for a single instance ($n = 4096$, $k = 512$, $m = 1024$, $\sigma = 10^{-4}$). All methods recover the signal with comparable fidelity.}\label{fig:cs_signals}
\end{figure}

\begin{figure}[htbp]
\centering
\includegraphics[width=0.78\textwidth]{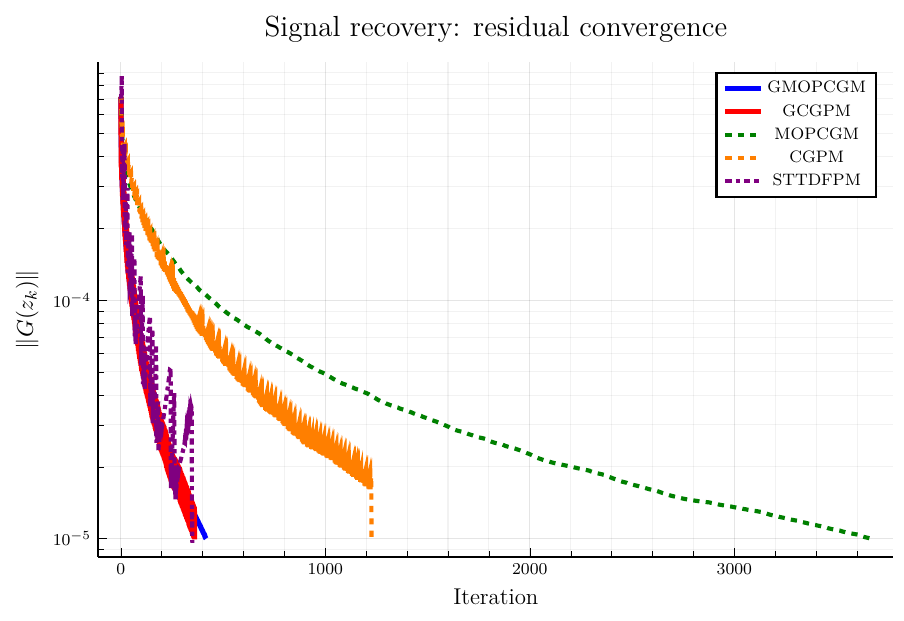}
\caption{Residual $\norm{G(z_k)}$ versus iteration for the compressed sensing instance in Figure~\ref{fig:cs_signals}. \STTDFPM{} and \GCGPM{} converge fastest; \MOPCGM{} requires significantly more iterations.}\label{fig:cs_residual}
\end{figure}

\section{Application to regularized logistic regression}\label{sec:logreg}

Unlike the compressed sensing application, where the monotone equation is linear, regularized logistic regression gives rise to a genuinely nonlinear monotone equation, providing a complementary test of the methods on a real-world machine learning task.

Binary classification---assigning labels to data based on observed features---is a fundamental problem in machine learning~\cite{chang2011libsvm}. Given $N$ labeled data pairs $(a_i, b_i) \in \R^n \times \{-1,+1\}$, $i = 1,\ldots,N$, the regularized logistic regression problem seeks a weight vector~$x$ that minimizes
\begin{equation}\label{eq:logreg}
  \min_{x \in \Gamma}\; f(x) = \frac{1}{N}\sum_{i=1}^{N}
  \log\!\bigl(1 + \exp(-b_i\,a_i^T x)\bigr) + \frac{\mu}{2}\norm{x}^2,
\end{equation}
where $\mu > 0$ is an $\ell_2$-regularization parameter promoting generalization, and $\Gamma = \{x \in \R^n : \norm{x}_\infty \leq C\}$ is a box constraint that bounds the model weights. Constraining the weights to a bounded set prevents numerical overflow in the exponential, serves as an implicit $\ell_\infty$-norm penalty that complements the $\ell_2$ term, and is standard practice in large-scale machine learning for improving generalization and numerical stability. The gradient of~$f$ is
\begin{equation}\label{eq:logreg_grad}
  G(x) \;:=\; \nabla f(x) \;=\; -\frac{1}{N}\sum_{i=1}^{N}
  \frac{b_i\,a_i}{1 + \exp(b_i\,a_i^T x)} \;+\; \mu\, x.
\end{equation}
The Hessian $\nabla^2 f(x) = \frac{1}{N}\sum_{i=1}^N \sigma_i(1 - \sigma_i)\,a_i a_i^T + \mu I$, where $\sigma_i = (1 + \exp(-b_i a_i^T x))^{-1}$, satisfies $\nabla^2 f(x) \succeq \mu I \succ 0$ for all~$x$. Therefore $f$ is strongly convex, $G$ is strictly monotone, and $G$ is Lipschitz continuous on the compact set~$\Gamma$ with constant $L \leq \frac{1}{4N}\norm{A^TA} + \mu$ (using $\sigma(1-\sigma) \leq 1/4$). Consequently, Assumptions~\ref{A1}--\ref{A3} are satisfied and the problem fits the framework of~\eqref{eq:problem}. Since all three assumptions hold, the convergence guarantees of both Theorem~\ref{thm:convergenceGMOP} and Theorem~\ref{thm:convergenceGCGPM} apply. We note that \GMOPCGM{} requires only Assumptions~\ref{A1}--\ref{A2} for convergence and would therefore remain applicable even for non-Lipschitz variants of this problem, such as replacing the smooth $\ell_2$ penalty with a non-smooth regularizer. The projection onto~$\Gamma$ reduces to componentwise clamping: $\PG(x)_j = \max\{-C,\, \min\{x_j,\, C\}\}$.

Following the experimental setup in~\cite{ibrahim2025relaxed}, we evaluate the five methods on binary classification datasets from the LIBSVM repository~\cite{chang2011libsvm}. Table~\ref{tab:libsvm_datasets} lists the datasets used, which range from $n = 122$ to $n = 2{,}000$ features and from $N = 62$ to $N = 47{,}272$ samples. All features are standardized to zero mean and unit variance prior to optimization. We set $\mu = 0.1$, $C = 10$, and generate initial points as $x_0 = 4(\xi - 0.5\cdot\mathbf{1}_n)$ where $\xi \sim \mathrm{Uniform}([0,1]^n)$. Each method uses the same parameters as in the benchmark experiments (Table~\ref{tab:params}), the same tolerance $\varepsilon = 10^{-11}$, and a maximum of 5{,}000 iterations. For each dataset, we run 5 independent trials with different random seeds and report median values.

\begin{table}[htbp]
\centering
\caption{LIBSVM datasets used in the logistic regression experiments.}\label{tab:libsvm_datasets}
\begin{tabular}{clrr}
\toprule
No.\ & Dataset & $N$ (samples) & $n$ (features) \\
\midrule
1  & a1a.t        & 30,956 & 123   \\
2  & a2a.t        & 30,296 & 123   \\
3  & a3a.t        & 29,376 & 123   \\
4  & a4a.t        & 27,780 & 123   \\
5  & a5a.t        & 26,147 & 123   \\
6  & a6a.t        & 21,341 & 123   \\
7  & a7a.t        & 16,461 & 123   \\
8  & a8a.t        & 9,865  & 122   \\
9  & a9a.t        & 16,281 & 122   \\
10 & colon-cancer & 62     & 2,000 \\
11 & w1a.t        & 47,272 & 300   \\
12 & w2a.t        & 46,279 & 300   \\
\bottomrule
\end{tabular}
\end{table}

Table~\ref{tab:logreg} summarizes the median performance across all 12 datasets and 5 trials (60 instances per method). Both \GMOPCGM{} and \GCGPM{} converge on all 60 instances, as does \STTDFPM. \MOPCGM{} fails on one colon-cancer instance (98.3\%), while \CGPM{} fails on four colon-cancer instances (93.3\%)---the only dataset where $n = 2{,}000$ features far exceed the $N = 62$ samples. In terms of efficiency, \STTDFPM{} achieves the fewest median iterations (150) and function evaluations (426), followed closely by \GCGPM{} (184 iterations). \GMOPCGM{} reduces the median iteration count from 2{,}447 (\MOPCGM) to~313, confirming the benefit of the spectral generalization on this nonlinear problem. All converging methods reach the same classification accuracy (approximately 73.6\% on the adult-income datasets), confirming that they converge to the same optimal solution.

\begin{table}[htbp]
\centering
\sisetup{exponent-mode=scientific, round-mode=places, round-precision=2}
\caption{Logistic regression: median performance over all 12 datasets and 5 trials (60 instances per method).}\label{tab:logreg}
\begin{tabular}{lrrrrrr}
\toprule
Method & Total & Conv (\%) & med IT & med FE & med CPU (s) & med $\norm{G(x^*)}$ \\
\midrule
\GMOPCGM & 60 & 100.0 & 313 & 940 & 1.17 & \num{9.90e-12} \\
\GCGPM   & 60 & 100.0 & 184 & 676 & 0.78 & \num{9.26e-12} \\
\MOPCGM  & 60 & 98.3  & 2447 & 4894 & 9.40 & \num{9.97e-12} \\
\CGPM    & 60 & 93.3  & 626 & 1407 & 2.66 & \num{9.83e-12} \\
\STTDFPM & 60 & 100.0 & 150 & 426 & 0.75 & \num{9.40e-12} \\
\bottomrule
\end{tabular}
\end{table}

\section{Conclusion}\label{sec:conclusion}

We have proposed two derivative-free projection methods, \GMOPCGM{} and \GCGPM, for solving large-scale nonlinear monotone equations subject to convex constraints. Both methods generalize existing conjugate gradient frameworks---the modified optimal Perry method (\MOPCGM) and the conjugate gradient projection method (\CGPM)---by incorporating an adaptive spectral scaling parameter that adjusts to the local geometry of the problem. We have shown that both methods satisfy a sufficient descent condition independent of the line search, and we have established their global convergence: for \GMOPCGM{} under monotonicity alone, and for \GCGPM{} under the additional Lipschitz continuity assumption.

Numerical experiments on 18 test problems across six dimensions up to $n = 120{,}000$ demonstrate that the spectral generalization consistently improves performance over the base methods. \GCGPM{} achieves the highest convergence rate and the fewest median iterations among all five methods. Applications to compressed sensing---with a systematic sweep over sparsity, measurement ratio, and noise level---and to regularized logistic regression on real-world classification datasets further confirm the practical effectiveness of both proposed methods on problems with fundamentally different structure: the linear monotone equation arising from $\ell_1$-regularization and the nonlinear monotone equation arising from the logistic loss.

Future research directions include extending the convergence analysis of \GCGPM{} to the pseudomonotone setting (removing the Lipschitz assumption), incorporating inertial acceleration techniques~\cite{jian2022inertial}, and investigating adaptive parameter selection strategies based on online learning.

\section*{Declarations}

\textbf{Conflict of interest:} The authors declare that they have no conflict of interest.

\section*{Data Availability}
The benchmark and compressed sensing data were generated algorithmically and are fully reproducible from the methods described in the manuscript. The logistic regression experiments use publicly available binary classification datasets from the LIBSVM repository~\cite{chang2011libsvm}, accessible at \url{https://www.csie.ntu.edu.tw/~cjlin/libsvmtools/datasets/}. No new datasets were created. The code used to generate the results can be made available upon reasonable request.

\section*{Funding}
This research did not receive any specific grant from funding agencies in the public, commercial, or not-for-profit sectors.

\section*{AI Use Declaration}
During the preparation of this work the authors used Claude (Anthropic) in order to assist with manuscript editing, including tightening prose, verifying LaTeX formatting, and checking internal consistency of cross-references and notation. After using this tool, the authors reviewed and edited the content as needed and take full responsibility for the content of the publication.

\section*{Acknowledgment}
The authors gratefully acknowledge the institutional support provided by King Fahd University of Petroleum \& Minerals (KFUPM). The authors also acknowledge the support of the Interdisciplinary Research Center for Smart Mobility and Logistics (IRC-SML), KFUPM, which facilitated the research environment in which this work was conducted.

\printbibliography

\end{document}